\title{Coordinate Descent with Arbitrary Sampling II:\\Expected Separable Overapproximation\thanks{The  authors  acknowledge support from the EPSRC  Grant EP/K02325X/1,
{\em Accelerated Coordinate Descent Methods for Big Data Optimization}. Most of the material of this paper was obtained by the authors in Spring 2014, and was presented by PR  in June 2014 at the  ``Khronos Days Summer School'' focused on
``High-Dimensional Learning and Optimization'' in Grenoble, France \cite{Grenoble_theory}; \url{http://www.maths.ed.ac.uk/\%7Eprichtar/docs/cdm-talk.pdf}.}}
\author{Zheng Qu \footnote{School of Mathematics, The University of Edinburgh, United Kingdom (e-mail: zheng.qu@ed.ac.uk)} \qquad \qquad 
 Peter Richt\'{a}rik \footnote{School of Mathematics, The University of Edinburgh, United Kingdom (e-mail: peter.richtarik@ed.ac.uk) }}
\begin{document}
\maketitle

\begin{abstract} The design and complexity analysis of randomized coordinate descent methods, and in particular of variants which update a random subset (sampling) of coordinates in each iteration, depends on the notion of  expected separable overapproximation (ESO). This refers to an inequality involving the objective function and the sampling,  capturing in a compact way certain smoothness properties of the function in a random subspace spanned by the sampled coordinates. ESO inequalities were  previously established for special classes of samplings only, almost invariably for uniform samplings. In this paper we develop a systematic technique for deriving these inequalities for a large class of functions  and for {\em arbitrary samplings}. We demonstrate that one can recover existing ESO results using our general approach, which is based on the study of  eigenvalues associated with samplings and the data describing the function. 

\end{abstract}

\section{Introduction}

Coordinate descent  methods have been popular with practitioners for many decades due to their inherent conceptual simplicity and ease with which one can produce a working code. However, up to a  few exceptions \cite{Tseng:CBCDM:Nonsmooth,Tseng:CCMCDM:Smooth}, they have been largely ignored in the optimization community until recently when a renewed interest in  coordinate descent was sparked by several reports of  their remarkable success in certain applications \cite{Protein2003,WuLange:2008,TTD}. Additional and perhaps more significant reason behind the recent flurry of research activity in the area of coordinate descent comes from  breakthroughs in our theoretical understanding of these methods through the introduction of {\em randomization} in the iterative process \cite{Nesterov12,RT:SPARS11,UCDC, PCDM,SDCA,Pegasos2,ICD,DQA,SPCDM,Hydra,Hydra2,Necoara:rcdm-coupled,RBCDMLuXiao,lee2013efficient, APPROX,APPROX-SC,APCG,WrightAsynchrous13,WrightAsynchrous14,QUARTZ,Paper1,UNI}. Traditional variants of coordinate descent rely on cyclic or greedy rules for the selection of the next coordinate to be updated.

\subsection{Expected Separable Overapproximation} \label{sec:introESO}
It has recently become increasingly clear that the design and complexity analysis of  randomized coordinate descent methods is intimately linked with and can be better understood through the notion of {\em expected separable overapproximation (ESO)} \cite{PCDM, Pegasos2, SPCDM, DQA, Hydra, APPROX, Hydra2, NSync, QUARTZ} and \cite{Paper1}. This refers to an inequality involving the objective function and the sampling (a random set valued mapping describing the law with which subsets of coordinates are selected at each iteration), capturing in a compact way certain smoothness properties of the function in a random subspace spanned by the sampled coordinates. 


A (coordinate) sampling $\hat{S}$ is a random set-valued mapping with values being subsets of $[n]\eqdef \{1,2,\dots,n\}$. It will be useful to write
\begin{equation}\label{eq:p_i}
p_i \eqdef \Prob(i\in \hat{S}), \quad i\in [n].
\end{equation}

\begin{defn}[Expected Separable Overapproximation]
Let $f: \R^n\to \R$  be a differentiable function and $\hat{S}$ a sampling. We say that $f$ admits an expected separable overapproximation (ESO) with respect to sampling $\hat{S}$ with parameters $v=(v_1,\dots,v_n)>0$ if the following inequality holds\footnote{This definition can in a straightforward way be extended the case when coordinates are replaced by {\em blocks} of coordinates \cite{PCDM}. In such a case, $h_i$ would be a allowed to be a vector of size larger than one, $e_i$ would be replaced by a column submatrix of the identity matrix (usually denoted $U_i$ i n the literature) and $h_i^2$ would be replaced by the squared norm of $h_i$ (it is often useful to design this norm based on properties of $f$).} for all $x,h\in \R^n$:

\begin{equation}\label{eq:ESOmain_def}\Exp\left[ f\left(x + \sum_{i\in \hat{S}} h_i e_i \right) \right] \leq f(x) +\sum_{i=1}^n p_i (\nabla_i f(x))^{\top} h_i + \frac{1}{2} \sum_{i=1}^n p_i v_i h_i^2.\end{equation}

We will compactly write $(f,\hat{S})\sim ESO(v)$.
\end{defn}

In this definition, $e_i$ is the $i$-th unit coordinate vector in $\R^n$ and $\nabla_i f(x)=(\nabla f(x))^\top e_i$ is the $i$-th partial derivative of $f$ at $x$. In the context of {\em block} coordinate descent, the above definition refers to the case when all blocks correspond to coordinates. For simplicity of exposition, we focus on this case. However, all our  results can be extended to the more general block setup.

Instead of the above general definition, it will be useful to the reader to instead think about the form of this inequality in the simple case when $f(x)=\|Ax\|^2$, where $\|\cdot\|$ is the L2 norm,  and $x=0$. Letting $A=[A_1,\dots,A_n]$, in this case inequality \eqref{eq:ESOmain_def} takes the form
\[\Exp\left[ \left\|\sum_{i\in \hat{S}} A_i h_i \right\|^2\right] \leq h^\top\Diag(p\circ v) h,\]
where $p\circ v$ denotes the Hadamard product of vectors $p=(p_1,\dots,p_n)$ and $v=(v_1,\dots,v_n)$; that is $p\circ v = (p_1v_1,\dots,p_nv_n)\in \R^n$, and $\Diag(p\circ v)$ is the $n$-by-$n$ diagonal matrix with vector $p\circ v$ on the diagonal. The term on the left hand side is a convex quadratic function of $h$, and so is the term on the right hand side -- however, the latter function has a diagonal Hessian. Hence,  for quadratics, finding the ESO parameter $v$ reduces to an eigenvalue problem.

The ESO inequality is of key  importance for randomized coordinate descent methods for several reasons:

\begin{itemize}
\item The parameters $v=(v_1,\dots,v_n)$ for which ESO holds are needed\footnote{All existing {\em parallel} coordinate coordinate descent methods for which a complexity analysis has been performed are designed with fixed stepsizes. Designing a line-search procedure in such a setup is a nontrivial task, and to the best of our knowledge, only a single paper in the literature deals with this issue \cite{UNI}. Certainly, properly designed line search has the potential to improve the practical performance of these methods.} to run coordinate descent. Indeed, they are used to set the stepsizes to a suitable value.  

\item The size of these parameters directly influences the complexity of the method (see Table~\ref{tbl:complexity}). 

\item There are problems for which updating more coordinates in each iteration, as opposed to updating just one, may {\em not} lead to fewer iterations \cite{PCDM} (which suggests that perhaps the resources should be instead utilized in some other way). Whether this happens or not can be understood through a careful study of the complexity result and its dependence, through the vectors $p$ and $v$, on the number of coordinates updated in each iteration \cite{PCDM, Pegasos2, Hydra, Hydra2, SPCDM, QUARTZ}. 

\item The ESO assumption is {\em generic} in the sense that as soon as function $f$ and sampling $\hat{S}$ satisfy it, the complexity result follows. This leads to a natural dichotomy  in the study of coordinate descent: i) the search for new variants of coordinate descent (e.g., parallel, accelerated, distributed) and study of their complexity under the  ESO assumption, and ii) the search for pairs  $(f,\hat{S})$ for which one can compute $v$ such that $(f,\hat{S})\sim ESO(v)$. Our current study follows this dichotomy: in \cite{Paper1} we deal with the algorithmic and complexity aspects, and in  this paper we deal with the ESO aspect.

\end{itemize}

\subsection{Complexity of coordinate descent}  As mentioned above, complexity of coordinate descent methods  depends in a crucial way on the optimization problem, sampling employed, and on the ESO parameters $v=(v_1,\dots,v_n)$. In Table~\ref{tbl:complexity} we summarize all known complexity results\footnote{We exclude from the table some earlier results \cite{UCDC}, where an arbitrary {\em serial} sampling was analyzed; i.e., sampling $\hat{S}$ for which $\Prob(|\hat{S}|=1)=1$. The situation is much simpler for serial samplings.} which hold for an {\em arbitrary sampling}. Note that in all cases, vectors $p$ and $v$ appear in the complexity bound. The bounds are not directly comparable as they apply to different optimization problems.

\begin{table}[!t]
\centering
\begin{tabular}{|c|c|c|}
\hline
Setup & Complexity & Method / Paper / Year \\
\hline
&&\\
\begin{tabular}{c}Strongly convex\\ Smooth
\end{tabular} & $\displaystyle \max_i \left(\frac{v_i}{ p_i \lambda}\right) \times \log\left(\frac{1}{\epsilon}\right)$ & NSync \cite{NSync}, 10/2013\\
&&\\
\begin{tabular}{c} Strongly convex\\
nonsmooth\\ (Primal-dual)
\end{tabular} & $\displaystyle \max_i \left( \frac{1}{p_i} + \frac{v_i}{p_i \lambda n}\right) \times \log \left(\frac{1}{\epsilon}\right)$ & QUARTZ \cite{QUARTZ}, 11/2014\\
&&\\
\begin{tabular}{c}Convex \\smooth 
\end{tabular}& $\displaystyle \sqrt{2\sum_{i=1}^n \frac{v_i (x^0_i-x^*_i)^2}{p_i^2}} \times \frac{1}{\sqrt{\epsilon}}$ & ALPHA \cite{Paper1}, 12/2014\\
&&\\
\hline
\end{tabular}
\caption{Complexity of randomized coordinate descent methods which were analyzed for an arbitrary sampling ($\lambda$ is a strong convexity constant, $x_0$ is the starting point and $x_*$ the optimal point. }
\label{tbl:complexity}
\end{table}

For instance, the NSync bound\footnote{Complexity of NSync  depends on the initial ($x_0$) and optimal ($x_*$) points,  but have hidden this dependence. The full bound is obtained by replacing $\log(1/\epsilon)$ by $\log((f(x_0)-f(x_*))/\epsilon)$, where $f$ is the objective function. } in Table~\ref{tbl:complexity} applies to the problem of unconstrained minimization of a smooth strongly convex function. It was in  \cite{NSync} where the general form  of the ESO inequality used in this paper was first  mentioned and used to derive a complexity result for a coordinate descent method with arbitrary sampling. 

The Quartz algorithm \cite{QUARTZ}, on the other hand, applies to a much more serious problem -- a problem of key importance  in machine learning. In particular, it applies to the regularized {\em empirical risk minimization} problem, where the loss functions are convex and have Lipschitz gradients and the regularizer is strongly convex and possibly nonsmooth. Coordinate ascent is applied to the dual of this problem, and the bound appearing in Table~\ref{tbl:complexity} applies to the duality gap\footnote{Complexity of Quartz depends on an initial pair of primal and dual vectors; we have omitted  this dependence from the table. The full complexity result is obtained by replacing $\log(1/\epsilon)$ by $\log(\Delta_0/\epsilon)$, where $\Delta_0$ is the difference between the primal and dual function values for a pair of  (primal and dual) starting points.}.

The APPROX method was first proposed in \cite{APPROX} and then generalized to an arbitrary sampling (among other things) in \cite{Paper1}. In its accelerated variant it enjoys a $O(1/\sqrt{\epsilon})$ rate, whereas it's non-accelerated variant has a slower $O(1/\epsilon)$ rate. Again, the complexity of the method explicitly depends on the vector of probabilities $p$ and the ESO parameter $v$. 

\subsection{Historical remarks} The ESO relation \eqref{eq:ESOmain_def} was first introduced  by Richt\'{a}rik and Tak\'{a}\v{c} \cite{PCDM} in the special case of {\em uniform samplings}, i.e.,  samplings for which $\Prob(i\in \hat{S})=\Prob(j\in \hat{S})$ for all coordinates $i,j\in \{1,2,\dots,n\}$. The uniformity condition is satisfied for a large variety of samplings, we refer the reader to \cite{PCDM} for a basic classification of uniform samplings (including overlapping, non-overlapping, doubly uniform, binomial, nice, serial and parallel samplings)  and to \cite{Hydra,Hydra2,QUARTZ} for further examples (e.g.,  ``distributed sampling''). The study of non-uniform samplings has until recently been confined to {\em serial} sampling only, i.e., to samplings which only pick a single coordinate at a time. In \cite{NSync} the authors propose a particular example of a {\em parallel nonuniform} sampling, where ``parallel'' refers to samplings for which $\Prob(|\hat{S}|>1)>0$, and ``non-uniform'' simply means not uniform. Further, they derive an ESO inequality for their sampling and a partially separable function. The proposed sampling is easy to generate (note that in  general a sampling is described by assigning distinct probabilities to all $2^n$ subsets of $[n]$, and hence most samplings will necessarily be hard to generate), and leads to strong ESO bounds which predict nearly linear speedup for NSync for sparse problems. A further example of a non-uniform sampling was given in \cite{QUARTZ}---the so-called ``product sampling''---and an associated ESO inequality derived. Intuitively speaking, this sampling samples sets of ``independent'' coordinates, which  leads to  complexity scaling linearly with the size of the sampled sets. To the best of our knowledge, this is the state of the art -- no further non-uniform samplings were proposed nor associated ESO inequalities derived.

\subsection{Contributions}
We now briefly list the contributions of this work.

\begin{enumerate}
\item  ESO inequalities were  previously established for special classes of samplings only, almost invariably for {\em uniform samplings} \cite{PCDM, Hydra,SPCDM, Hydra2, APPROX},  and often using seemingly disparate   approaches.  We give the first {\em systematic study of ESO inequalities} for {\em arbitrary} samplings. 

\item We recover existing ESO results by applying our general technique.

\item Our approach to deriving ESO inequalities is via the study of random principal submatrices of a positive semidefinite matrix. In particular, we give bounds on the largest eigenvalue  of the mean of the random submatrix. This may be of independent interest.
\end{enumerate}


\subsection{Outline of the paper}

Our paper is organized as follows. In Section~\ref{sec:functions} we  describe the class of functions ($f$) we consider in this paper and briefly establish some basic terminology related to samplings ($\hat{S}$).  In Section~\ref{sec:prob} we study probability matrices associated with samplings ($\bP(\hat{S})$), in Section~\ref{sec:eig} we study eigenvalues of these probability matrices ($\lambda(\bP(\hat{S}))$ and $\lambda'(\bP(\hat{S}))$) and in Section~\ref{sec-ESO} we design a general technique for computing parameter  $v=(v_1,\dots,v_n)$ for which the ESO inequality holds (i.e., for which $(f,\hat{S})\sim ESO(v)$). We illustrate the use of these techniques in Section~\ref{sec:applications} and
conclude with Section~\ref{sec:conclusion}.

\section{Functions and samplings} \label{sec:functions}

Recall that in the paper we are concerned with establishing inequality \eqref{eq:ESOmain_def} which we succinctly write as $(f,\hat{S})\sim ESO(v)$. In Section~\ref{subsec:functions} we describe the class of functions $f$ we consider in this paper and in Section~\ref{subsec:samplings} we briefly review several elementary facts related to samplings.

\subsection{Functions} \label{subsec:functions}

We assume in this paper that $f:\R^n\to \R$ is differentiable and that it satisfies the following assumption (however, the first time we will again talk about functions is in Section~\ref{sec-ESO}).

\begin{assump}\label{ass:f}
There is an $m$-by-$n$ matrix $\bA$ such that for all $x,h\in \R^n$,
\begin{equation} \label{eq:shs7hs8} f(x+h) \leq f(x) + \ve{\nabla f(x)}{ h } + \frac{1}{2}h^{\top} \bA^{\top} \bA h .\end{equation}
\end{assump}

In the subsequent text, we shall often refer to the set of columns of $\bA$ for which  the entry in the $j$-th row of $\bA$ is nonzero:
\begin{equation}\label{a-Jj}
J_j \eqdef \{i \in [n]\;:\; \bA_{ji}\neq 0\}.
\end{equation}

Assumption~\ref{ass:f} holds for many functions of interest in optimization and machine learning. Coordinate  descent methods for functions $f$ explicitly required to satisfy Assumption~\ref{ass:f} were studied in \cite{SHOTGUN, Hydra, Hydra2}. 

The following simple observation will help us relate the above assumption with standing assumptions  considered in various papers on randomized coordinate descent methods.

\begin{prop}\label{lem:sum_structure_1}
Assume $f$ is of the  form 

\begin{equation}\label{eq:sumform}
f(x)=\sum_{j=1}^s  \phi_j(\bM_j x),
  \end{equation} 
  where for each $j$,  $\bM_j \in \R^{d\times n}$ and
function $\phi_j:\R^d \to \R$ has
$\gamma_j$-Lipschitz continuous gradient (with respect to the L2 norm).  Then $f$ satisfies Assumption~\ref{ass:f} for matrix $\bA$ given by
\[\bA^\top \bA = \sum_{j=1}^s \gamma_j \bM_j^\top \bM_j.\]
\end{prop}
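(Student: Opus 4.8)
The plan is to prove this by a direct calculation: apply the Lipschitz gradient property of each $\phi_j$ to get a quadratic upper bound on each summand, and then add these bounds together to recover the claimed form. The $\gamma_j$-Lipschitz continuity of $\nabla \phi_j$ with respect to the L2 norm gives the standard descent-lemma–type inequality $\phi_j(y+z) \le \phi_j(y) + \ve{\nabla \phi_j(y)}{z} + \tfrac{\gamma_j}{2}\|z\|^2$ for all $y,z\in\R^d$. The intended substitution is $y = \bM_j x$ and $z = \bM_j h$, which turns $\phi_j(\bM_j(x+h))$ into $\phi_j(\bM_j x) + \ve{\nabla \phi_j(\bM_j x)}{\bM_j h} + \tfrac{\gamma_j}{2}\|\bM_j h\|^2$.

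The key steps, in order, are: (i) recall or cite the descent lemma as the consequence of $\gamma_j$-Lipschitz continuity of $\nabla\phi_j$; (ii) write $f(x+h) = \sum_{j=1}^s \phi_j(\bM_j x + \bM_j h)$ and apply the descent lemma term by term; (iii) use the chain rule $\nabla f(x) = \sum_{j=1}^s \bM_j^\top \nabla\phi_j(\bM_j x)$ to identify $\sum_j \ve{\nabla\phi_j(\bM_j x)}{\bM_j h} = \sum_j \ve{\bM_j^\top\nabla\phi_j(\bM_j x)}{h} = \ve{\nabla f(x)}{h}$; and (iv) rewrite $\sum_j \tfrac{\gamma_j}{2}\|\bM_j h\|^2 = \tfrac12 h^\top\big(\sum_j \gamma_j \bM_j^\top\bM_j\big)h = \tfrac12 h^\top\bA^\top\bA h$, using the definition of $\bA$ via $\bA^\top\bA = \sum_j \gamma_j \bM_j^\top\bM_j$. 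Collecting the three groups of terms yields exactly \eqref{eq:shs7hs8}.

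One small point worth addressing explicitly is that $\bA$ is only specified through the positive semidefinite matrix $\bA^\top\bA$; since $\sum_j \gamma_j\bM_j^\top\bM_j$ is symmetric positive semidefinite (each $\gamma_j \ge 0$ and $\bM_j^\top\bM_j \succeq 0$), such an $\bA$ exists, e.g. as a matrix square root, and Assumption~\ref{ass:f} only ever uses $\bA$ through the quadratic form $h^\top\bA^\top\bA h$, so the particular choice is immaterial.

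I do not anticipate a genuine obstacle here — the statement is essentially a bookkeeping exercise combining the descent lemma with linearity of the gradient under a linear change of variables. The only thing to be careful about is making sure the differentiability of $f$ (needed for $\nabla f(x)$ to make sense in \eqref{eq:shs7hs8}) is inherited correctly: each $\phi_j$ has a Lipschitz gradient hence is $C^1$, and composition with the linear map $\bM_j$ preserves $C^1$, so the finite sum $f$ is $C^1$ and the chain-rule identity for $\nabla f$ used in step (iii) is valid.
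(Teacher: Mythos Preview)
Your proposal is correct and follows essentially the same approach as the paper: apply the descent lemma to each $\phi_j$ at the point $\bM_j x$ with increment $\bM_j h$, use the chain rule to identify the linear term with $\ve{\nabla f(x)}{h}$, and sum over $j$ to obtain the quadratic bound with $\bA^\top\bA=\sum_j\gamma_j\bM_j^\top\bM_j$. Your additional remarks on the existence of $\bA$ and the differentiability of $f$ are accurate and not present in the paper's proof, but they do not change the argument.
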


\begin{proof} Pick $x,h\in \R^n$ and let $f_j(x)\eqdef \phi_j(\bM_j x)$. Then
 since $\phi_j$ is $\gamma_j$-smooth, we have
\begin{eqnarray*}
f_j(x+h) =  \phi_j\left(\bM_j x + \bM_j h\right) &\leq& \phi_j \left(\bM_j x \right) + \ve{\nabla \phi_j\left(\bM_j x \right)}{\bM_j h} + \tfrac{\gamma_j}{2}\left\|\bM_j h\right\|^2\\
& = &  f_j(x) +  \ve{\nabla f_j(x)}{h} + \tfrac{\gamma_j}{2} h^T \bM_j^\top \bM_j h.
\end{eqnarray*}
It remains to add these inequalities for $j=1,\dots,s$.
\end{proof}

By $\bI$ we denote the $n$-by-$n$ identity matrix and for $S\subseteq [n]$ we will use the notation $\bI_{[S]}$ for the $n$-by-$n$ matrix obtained from $\bI$ by retaining  elements $\bI_{ii}$ for which $i\in S$  and zeroing out all other elements.

We now  apply Proposition~\ref{lem:sum_structure_1} to several special cases:
\begin{enumerate}
\item \textbf{Partial separability.} Let $d=n$ and $\bM_j = \bI_{[C_j]}$, where for each $j$, $C_j\subseteq [n]$. Then $f$ is of the form
\begin{equation}\label{eq:sbi8s98hs}f(x) = \sum_{j=1}^s \phi_j(\bI_{[C_j]} x).\end{equation}
That is, $\phi_j$ depends on coordinates of $x$ belonging to set $C_j$ only. By Proposition~\ref{lem:sum_structure_1}, $f$ satisfies \eqref{eq:shs7hs8}, 
where $\bA$ is the $n$-by-$n$ diagonal matrix given by \[\bA_{ii} = \sqrt{\sum_{j:i \in C_j} \gamma_j}, \qquad i\in [n].\]

Functions of the form \eqref{eq:sbi8s98hs} (i.e., partially separable functions) were considered in the context of {\em parallel coordinate descent methods} in \cite{PCDM}. However, in \cite{PCDM} the authors only assume the {\em sum}  $f$ to have a Lipschitz gradient (which is more general, but somewhat complicates the analysis), whereas we assume that all component functions $\{\phi_j\}_{j}$ have Lipschitz gradient.

\item \textbf{Linear transformation  of variables.} Let $s=1$. Then $f$ is of the form
\begin{equation} \label{eq:igs87t9uohsohs98}f(x) = \phi_1(\bM_1 x).\end{equation}
 By Proposition~\ref{lem:sum_structure_1}, $f$ satisfies \eqref{eq:shs7hs8}, 
where $\bA$ is  given by \[\bA = \sqrt{\gamma_1}\bM_1.\]

A functions of the form \eqref{eq:igs87t9uohsohs98} appears in the {\em dual problem} of the standard primal-dual formulation to which  {\em stochastic dual coordinate ascent} methods  are applied \cite{SDCA, MinibatchASDCA, IProx-SDCA, APCG, QUARTZ}.

\item \textbf{Sum of scalar functions depending on $x$ through an inner product.} Let $d=1$ and $\bM_j= e_j^T \bM$, where $\bM \in \R^{m\times n}$ and $e_j$ is the $j$-th unit coordinate vector in $\R^m$. Then $f$ is of the form \begin{equation}\label{eq:sum_hydra}
f(x)=\sum_{j=1}^m  \phi_j(e_j^\top \bM x).
  \end{equation} 
  By Proposition~\ref{lem:sum_structure_1}, $f$ satisfies \eqref{eq:shs7hs8}, with $\bA$ given by 
  \[\bA = \Diag(\sqrt{\gamma_1},\dots,\sqrt{\gamma_m})\bM.\]
Functions of the form \eqref{eq:sum_hydra} play an important role in the design of {\em efficiently implementable accelerated coordinate descent methods} \cite{APPROX, Paper1}. These functions also appear in the  
{\em primal problem} of the standard primal-dual formulation to which  {\em stochastic dual coordinate ascent} methods  are applied.
\end{enumerate}


\subsection{Samplings} \label{subsec:samplings}

As defined in the introduction, by sampling we mean a random set-valued mapping with values in $2^{[n]}$ (the set of subsets of $[n]$).

\paragraph{Classification of samplings.}  Following the terminology  established in \cite{PCDM}, we say that sampling $\hat{S}$ is {\em proper} if $p_i =\Prob(i\in \hat{S})>0$ for all $i\in [n]$. We shall focus our attention on proper samplings as otherwise there is a coordinate which is never chosen (and hence never updated by the coordinate descent method).  We say that $\hat{S}$ is {\em nil} if $\Prob(\hat{S}=\emptyset)=1$.

Of key importance in this paper are {\em elementary samplings}, defined next.

\begin{defn}[Elementary samplings]\label{df:elementary}
Elementary sampling associated with $S \subseteq [n]$ is sampling which selects set  $S$ with probability one. We will denote it by $\hat{E}_S$: $\Prob(\hat{E}_S=S) = 1$.
\end{defn}

By {\em image} of sampling $\hat{S}$ we mean the collection of sets which are chosen with positive probability: $\Image(\hat{S}) = \{S\subseteq [n]\;:\; \Prob(\hat{S}=S)>0\}$. We say that $\hat{S}$ is {\em nonoverlapping}, if no two sets in its image intersect.  We say that the sampling  is {\em uniform} if $\Prob(i\in \hat{S})=\Prob(j\in \hat{S})$ for all $i,j\in [n]$. The class of uniform samplings is large, for examples (and properties) of notable subclasses, we refer the reader to \cite{PCDM} and \cite{Hydra}. 

We say that sampling $\hat{S}$ is {\em doubly uniform} if it satisfies the following condition: if $|S_1|=|S_2|$, then $\Prob(\hat{S}=S_1)=\Prob(\hat{S}=S_2)$. Necessarily, every doubly uniform sampling is uniform \cite{PCDM}. The definition postulates an additional ``uniformity'' property (``equal cardinality implies equal probability''), whence the name. As described in \cite{PCDM}, doubly uniform samplings are special in the sense that ``good'' ESO results can be proved for them. A notable subclass of the class of doubly uniform samplings are the $\tau$-nice samplings for $1\leq \tau\leq n$.  The $\tau$-nice sampling is obtained by picking (all) subsets of cardinality $\tau$, uniformly at random (we give a precise definition below).
This sampling is by far the most common in stochastic optimization, and refers to standard mini-batching. The $\tau$-nice sampling arises as a special case of  the $(c,\tau)$-distributed sampling (which, as its name suggests, can be used to design distributed variants of coordinate descent \cite{Hydra,Hydra2}), which we define next:

\begin{defn}[$(c,\tau)$-distributed sampling; \cite{Hydra, Hydra2, QUARTZ}]
\label{defn:distrib} Let ${\cal P}_1, \dots, {\cal P}_c$ be a partition of $\{1,2,\dots,n\}$ such that $|{\cal P}_l| = s$ for all $l$. That is, $sc = n$. Now let $\hat{S}_1,\dots,\hat{S}_c$ be independent $\tau$-nice samplings from ${\cal P}_1, \dots, {\cal P}_c$, respectively.  Then the sampling 
\begin{equation} \label{eq:sjsiuhuis}\hat{S}\eqdef \bigcup_{l=1}^c \hat{S}_l,\end{equation}
is called {$(c,\tau)$-distributed sampling.}
\end{defn}

The $\tau$-nice sampling arises as a special case of the $(c,\tau)$-distributed sampling (for $c=1$) which we define next.

\begin{defn}[$\tau$-nice sampling; \cite{PCDM, Pegasos2, DQA, SPCDM, APPROX}]\label{def:tau-nice}
Sampling $\hat S$ is called $\tau$-nice if it picks only subsets of $[n]$ of cardinality $\tau$, uniformly at random. More formally, it is defined by 
\begin{equation}\Prob(\hat{S}=S) = \begin{cases} 1/{n \choose \tau}, & |S|=\tau,\\
0, & \text{otherwise}. \end{cases}\end{equation}
\end{defn}

\paragraph{Operations with samplings.} We now define several basic operations with samplings (convex combination, intersection and restriction).

\begin{defn}[Convex combination of samplings; \cite{PCDM}] Let $\hat{S}_1,\dots,\hat{S}_k$ be samplings and let $q_1,\dots,q_k$ be nonnegative scalars summing  to 1. By $\sum_{t=1}^k q_t \hat{S}_t$ we denote the sampling obtained as follows: we first pick $t\in \{1,\dots,k\}$, with probability $q_t$, and then  sample according to $\hat{S}_t$. More formally, $\hat{S}$ is defined as follows:
\begin{equation}\label{eq:convex_comb}\Prob(\hat{S}=S) = \sum_{t=1}^k q_t \Prob(\hat{S}_t = S), \qquad S\subseteq [n].\end{equation}
\end{defn}

Note that \eqref{eq:convex_comb} indeed defines  a sampling, since 
\[\sum_{S\subseteq [n]} \Prob(\hat{S}=S) = \sum_{S\subseteq [n]} \sum_{t=1}^k q_t \Prob(\hat{S}_t = S) =   \sum_{t=1}^k q_t  \sum_{S\subseteq [n]} \Prob(\hat{S}_t = S) =  \sum_{t=1}^k q_t = 1.\]

Each sampling is a convex combination of elementary samplings. Indeed, for each $\hat{S}$ we have
\begin{equation}\label{eq:9hs8h9h}\hat{S} = \sum_{S\subseteq [n]} \Prob(\hat{S}=S) \hat{E}_S.\end{equation}

We now show that each doubly uniform sampling arises as a convex combination of $\tau$-nice samplings.

\begin{prop} \label{prop:DU}Let $\hat{S}$ be a doubly uniform sampling and let $\hat{S}_\tau$ be the $\tau$-nice sampling, for $\tau=0,1,\dots,n$. Then
\[\hat{S} = \sum_{\tau=0}^n \Prob(|\hat{S}|=\tau) \hat{S}_\tau.\]
\end{prop}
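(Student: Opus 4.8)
The plan is to verify the claimed identity directly from the definition of a convex combination of samplings, equation~\eqref{eq:convex_comb}. Write $q_\tau \eqdef \Prob(|\hat{S}|=\tau)$ for $\tau = 0,1,\dots,n$. These are nonnegative, and $\sum_{\tau=0}^n q_\tau = 1$ because the random variable $|\hat{S}|$ always takes a value in $\{0,1,\dots,n\}$; hence $\sum_{\tau=0}^n q_\tau \hat{S}_\tau$ is a bona fide sampling. By \eqref{eq:convex_comb}, it therefore suffices to show that for every $S\subseteq [n]$,
\[\Prob(\hat{S}=S) = \sum_{\tau=0}^n q_\tau\, \Prob(\hat{S}_\tau = S).\]

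To evaluate the right-hand side, fix $S\subseteq[n]$ and put $k\eqdef |S|$. By Definition~\ref{def:tau-nice}, $\Prob(\hat{S}_\tau = S) = 0$ whenever $\tau\neq k$, while $\Prob(\hat{S}_k = S) = 1/\binom{n}{k}$. So the sum collapses to a single term and equals $q_k/\binom{n}{k} = \Prob(|\hat{S}|=k)/\binom{n}{k}$.

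It then remains to identify $\Prob(\hat{S}=S)$ with $\Prob(|\hat{S}|=k)/\binom{n}{k}$, and this is exactly where the hypothesis of double uniformity is used: since $|T_1| = |T_2|$ implies $\Prob(\hat{S}=T_1) = \Prob(\hat{S}=T_2)$, all $\binom{n}{k}$ subsets $T$ of $[n]$ with $|T| = k$ carry the same probability, equal to $\Prob(\hat{S}=S)$. Summing over them gives
\[\Prob(|\hat{S}|=k) = \sum_{T\subseteq[n],\ |T|=k} \Prob(\hat{S}=T) = \binom{n}{k}\,\Prob(\hat{S}=S),\]
which rearranges to the required equality and finishes the argument.

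Since the whole proof is a short direct computation, there is no genuine obstacle here; the only points needing care are splitting the sum into the cases $\tau = k$ and $\tau\neq k$, and correctly invoking the defining ``equal cardinality implies equal probability'' property of doubly uniform samplings together with the count of $k$-subsets to express $\Prob(\hat{S}=S)$ through $\Prob(|\hat{S}|=k)$.
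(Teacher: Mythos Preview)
Your proof is correct and follows essentially the same route as the paper: both verify the convex-combination identity \eqref{eq:convex_comb} by fixing $S\subseteq[n]$ and showing $\Prob(\hat S=S)=\sum_\tau q_\tau \Prob(\hat S_\tau=S)$, using double uniformity to reduce to the single term $\tau=|S|$. The only cosmetic difference is that the paper phrases the computation via the conditional probability $\Prob(\hat S=S\mid |\hat S|=\tau)$, whereas you spell out the count-and-divide argument explicitly.
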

\begin{proof} Fix any $S\subseteq [n]$ and let $q_\tau=\Prob(|\hat{S}|=\tau)$. Note that
\[\Prob(\hat{S}=S) =  \sum_{\tau=0}^n \Prob(\hat{S}=S \; \& \; |\hat{S}|=\tau ) = \sum_{\tau=0}^n q_\tau \Prob(\hat{S}=S \;|\; |\hat{S}|=\tau) =  
\sum_{\tau=0}^n q_\tau \Prob(\hat{S}_\tau =S),
\]
where the last equality follows from the definition of doubly uniform and $\tau$-nice  samplings. The statement then follows from  \eqref{eq:convex_comb} (i.e., by definition of convex combination of samplings).
\end{proof}

It will be useful to define two more operations with samplings; intersection and restriction.

\begin{defn}[Intersection of samplings]\label{def:intersection}
For two samplings  $\hat{S}_1$ and $\hat{S}_2$ we define the intersection $\hat{S}\eqdef \hat{S}_1\cap \hat{S}_2$ as the sampling for which:
\[\Prob(\hat{S}=S) = \Prob(\hat{S}_1\cap \hat{S}_2=S), \quad S\subseteq [n].\]
\end{defn}

\begin{defn}[Restriction of a sampling]\label{def:restriction}
Let $\hat{S}$ be a sampling and $J \subseteq [n]$. By restriction of $\hat{S}$ to $J$ we mean the sampling $\hat{E}_J\cap \hat{S}$.
By abuse of notation we will also write this sampling as $J\cap  \hat S$.
\end{defn}

\paragraph{Graph sampling.} Let $G=(V,E)$ be an undirected graph with $|V|=n$ vertex and 
$(i,i')$ be an edge in $E$ if and only if there is $j \in[m]$ such that $\{i,i'\}\subseteq J_j $.  If $S$ is an independent set of graph $G$, then necessarily
$$\max_{j\in [m]} | J_j \cap S| = 1.$$
Denote by $\cT$ the collection of all independent sets of the graph $G$. We now define the graph sampling as follows:

\begin{defn}[Graph sampling]
Graph sampling associated with graph $G$  is any sampling $\hat{S}$ for which $\Prob(\hat{S}=S)=0$ if $S\notin \cT$. In other words, a graph sampling can only assign positive weights to independent sets of $G$.
\end{defn}

Let $\hat{S}$ be a graph sampling. In view of \eqref{eq:9hs8h9h}, for some nonnegative constants $q_S$ adding up to 1:
$$
\hat S=\sum_{S\in \cT } q_S \hat E_S
$$
 Note that, necessarily,  $q_S=\Prob(\hat S=S)$ for all $S\in \cT$.

\begin{defn}[Product sampling] \label{def:prosam}
Let $X_1,\dots,X_{\tau}$ be a partition of $[n]$, i.e.,
$$
\displaystyle  X_1\cup \dots X_\tau=[n]; \enspace X_i\cap X_j=\emptyset,\enspace\forall 1\leq i<j\leq n.$$
Define:
$$
{\cal S} \eqdef  X_{1}\times \cdots \times X_{\tau}.
$$
The {product sampling}
 $\hat{S}$ is obtained by choosing $S \in {\cal S}$, uniformly at random; that is, via: \begin{equation}\label{eq:si9s8hs}\Prob(\hat{S}=S) = \frac{1}{|{\cal S}|} = \frac{1}{\prod_{l=1}^\tau |X_l|}, \quad S \in {\cal S}. \end{equation}
\end{defn}
A similar sampling was first considered in~\cite[Section 3.3]{QUARTZ} with an additional \textit{group separability assumption} on the partition
$X_1,\dots,X_{\tau}$, which can be equivalently 
stated as:
$$
\max_{j\in m} |J_j\cap S|=1,\enspace \forall S\in {\cal S}.
$$
In other words, it is both a product sampling and graph sampling.
Note that in Definition~\ref{def:prosam} we do not make any assumption on the partition.
Also, the product sampling is a nonuniform sampling as long as all the 
sets $X_l$ do not have the same cardinality, which occurs necessarily if $\tau$, representing the number of processors, is not divisible by $n$.

\section{Probability matrix associated with a sampling}\label{sec:prob}

In this section we define the notion of a {\em probability matrix} associated with a sampling.  As we shall see in later sections, this matrix encodes all information about $\hat{S}$ which is relevant for development of ESO inequality.

\begin{defn}[Probability matrix]
With each  sampling $\hat{S}$ we associate 	an  $n$-by-$n$ ``probability matrix'' $\bP=\bP(\hat{S})$ defined by
\[\bP_{ij} = \Prob(\{i,j\} \subseteq  \hat{S} ), \qquad  i,j\in[n].\]
\end{defn}

We shall write $\bP(\hat{S})$ when it is important to indicate which sampling is behind the probability matrix, otherwise we simply write $\bP$.

For two  matrices  $\bM_1$ and $\bM_2$ of the same size, we denote by $\bM_1\circ \bM_2$ their Hadamard (i.e., elementwise) product. We use the same notation for Hadamard product of vectors. For arbitrary matrix $\bM\in \R^{n\times n}$ and $S\subseteq [n]$ we will use the notation $\bM_{[S]}$ for the $n$-by-$n$ matrix obtained from $\bM$ by retaining  elements $\bM_{ij}$ for which both $i\in S$ and $j\in S$ and zeroing out all other elements.
In what follows,  by $\bE$ we denote the $n$-by-$n$ matrix of all ones and by $\bI$ we denote the $n$-by-$n$ identity matrix. For  any $h=(h_1,\dots,h_n)\in \R^n$ and $S\subseteq [n]$ we will write
\begin{equation}\label{eq:ks43947}
h_{[S]} \eqdef \sum_{i\in S} h_i e_i = \bI_{[S]} h,
\end{equation}
where $e_1,\dots,e_n$ are the standard basis vectors in $\R^n$. Also note that 
\begin{equation}\label{eq:sig98s445566}\bM_{[S]} = \bE_{[S]}\circ \bM = \bI_{[S]} \bM \bI_{[S]}.\end{equation}

Using the notation we have just established, probability matrices of  elementary samplings  are given by
\begin{equation}\label{eq:bPEJ}\bP(\hat{E}_S) = \bE_{[S]} = e_{[S]} e_{[S]}^\top,\end{equation}
where $e\in \R^n$ is the  vector of all ones. In particular, the matrix is  rank-one and positive semidefinite. 


\subsection{Representation of probability matrices}

We now establish a simple but particularly insightful result, leading to many useful identities.

\begin{theo}\label{thm:8sy98ys}For each sampling $\hat{S}$ we have
\begin{equation}\label{eq:s989shsos}\bP(\hat{S}) = \Exp \left[\bE_{[\hat{S}]}\right] = \sum_{S\subseteq [n]}\Prob(\hat{S}=S) \bE_{[S]}.\end{equation}
In particular:
\begin{itemize}
\item[(i)] The set of probability matrices is the convex hull of the probability matrices corresponding to elementary samplings.
\item[(ii)] $\bP(\hat{S})\succeq 0$ for each $\hat{S}$.
\end{itemize}
\end{theo}
\begin{proof} The $(i,j)$ element of the matrix on the right hand side is $\Exp \left[(\bE_{[\hat{S}]})_{ij}\right]$. Since $(\bE_{[\hat{S}]})_{ij} = 1$ if $\{i,j\}\subseteq \hat{S}$ and $(\bE_{[\hat{S}]})_{ij} = 0$ otherwise, we have  $\Exp \left[(\bE_{[\hat{S}]})_{ij}\right] = \Prob(\{i,j\}\subseteq \hat{S}) =  (\bP(\hat{S}))_{ij}$.  Claim (i) follows from \eqref{eq:s989shsos} since $\bE_{[S]}=\bP(\hat{E}_S)$. Claim (ii) follows from \eqref{eq:s989shsos} since $\bE_{[S]}\succeq 0$ for all $S\subseteq [n]$.
\end{proof}

We have the following useful corollary:\footnote{Identities \eqref{eq:jsu8js8}--\eqref{eq:is8js8sos0trace} were already established in \cite{PCDM}, in a different way  without relying on Theorem~\ref{thm:8sy98ys}, which is new. However, in this paper a key role is played by identities \eqref{eq:sj78jdi}--\eqref{eq:sjdud7xxx}, which are also new. It was while proving these identities that we realized the fundamental nature of Theorem~\ref{thm:8sy98ys}, as a vehicle for obtaining all identities in Corollary~\ref{lem:main} as a  consequence. The identities will be needed in further development. For illustration of a different proof technique, here is an alternative proof of \eqref{eq:sjdud7xxx}: \[\textstyle
\Exp\left[ h_{[\hat{S}]}^T \bM h_{[\hat{S}]} \right]  =
 \Exp\left[ \sum_{(i,i')\in \hat{S}\times \hat{S}} \bM_{ii'}h_{i'} h_i\right] 
= \sum_{(i,i')\in [n]\times [n]}\Prob(i\in \hat{S},i'\in \hat{S})\bM_{ii'}h_{i'} h_i \;=\; h^T(\bP\circ \bM)h\].}

\begin{coro}
\label{lem:main}
 Let $\hat{S}$ be any sampling, $\bP=\bP(\hat{S})$, $\bM \in \R^{n \times n}$ be an arbitrary matrix and $h\in \R^n$. Then the following identities hold:
\begin{eqnarray}
 \label{eq:sj78jdi} 
 \bP \circ \bM  &=& \Exp\left[ \bM_{[\hat{S}]}\right]\\
\label{eq:sjdud7xxx}
h^\top \left( \bP \circ \bM\right) h &=& \Exp\left[ h^\top \bM_{[\hat{S}]} h\right] \;=\; \Exp\left[ h_{[\hat{S}]}^\top \bM h_{[\hat{S}]} \right] \\ 
\label{eq:jsu8js8}
h^\top \bP h &=&  \Exp \left[ \Big(\sum_{i \in \hat{S}} h_i\Big)^2\right] \\
\label{eq:js9s0sj}
\sum_{i=1}^n \bP_{ii} h_i   &=&   \Exp \left[ \sum_{i \in \hat{S}} h_i\right]\\
\label{eq:is8js8sos0}
e^T \bP e &=& \Exp \left[  |\hat{S}|^2 \right]\\
\label{eq:is8js8sos0trace}
\Tr{(\bP)} &=& \Exp \left[  |\hat{S}| \right]
\end{eqnarray}
\end{coro}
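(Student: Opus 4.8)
The plan is to derive all six identities from Theorem~\ref{thm:8sy98ys}, i.e.\ from the representation $\bP(\hat S)=\Exp[\bE_{[\hat S]}]$ in \eqref{eq:s989shsos}, treating \eqref{eq:sj78jdi} as the master identity and obtaining the other five as specializations. First I would establish \eqref{eq:sj78jdi}. Since a sampling takes only finitely many values (the subsets of $[n]$), every expectation below is a finite convex combination $\sum_{S\subseteq[n]}\Prob(\hat S=S)(\cdot)$, so it commutes with any linear operation; in particular, Hadamard multiplication by the fixed matrix $\bM$ is linear, whence
\[
\bP\circ\bM \;=\; \Exp\big[\bE_{[\hat S]}\big]\circ\bM \;=\; \Exp\big[\bE_{[\hat S]}\circ\bM\big] \;=\; \Exp\big[\bM_{[\hat S]}\big],
\]
where the last equality is \eqref{eq:sig98s445566}.

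Next, for \eqref{eq:sjdud7xxx} I would left- and right-multiply \eqref{eq:sj78jdi} by $h$: the map $\bM'\mapsto h^\top\bM' h$ is linear, so $h^\top(\bP\circ\bM)h=\Exp[h^\top\bM_{[\hat S]}h]$. For the second equality, \eqref{eq:sig98s445566} gives $\bM_{[S]}=\bI_{[S]}\bM\bI_{[S]}$, and since $\bI_{[S]}$ is symmetric and $h_{[S]}=\bI_{[S]}h$ by \eqref{eq:ks43947}, we get $h^\top\bM_{[S]}h=(\bI_{[S]}h)^\top\bM(\bI_{[S]}h)=h_{[S]}^\top\bM h_{[S]}$ for every $S$; taking expectations finishes the line.

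The remaining four identities are direct specializations. Taking $\bM=\bE$ (the all-ones matrix) in \eqref{eq:sjdud7xxx}, and using $\bP\circ\bE=\bP$ together with $h_{[S]}^\top\bE h_{[S]}=\big(\sum_{i\in S}h_i\big)^2$, yields \eqref{eq:jsu8js8}; setting $h=e$ there gives \eqref{eq:is8js8sos0}, since $\sum_{i\in\hat S}1=|\hat S|$. For \eqref{eq:js9s0sj} I would argue directly with indicator variables: $\Exp[\sum_{i\in\hat S}h_i]=\sum_{i=1}^n\Prob(i\in\hat S)\,h_i=\sum_{i=1}^n\bP_{ii}h_i$, where $\bP_{ii}=\Prob(\{i\}\subseteq\hat S)=p_i$ by \eqref{eq:p_i}. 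Finally, setting $h=e$ in \eqref{eq:js9s0sj} (equivalently, taking the trace in \eqref{eq:sj78jdi} with $\bM=\bI$) gives \eqref{eq:is8js8sos0trace}, because $\Tr(\bP)=\sum_i\bP_{ii}=\sum_i p_i=\Exp[|\hat S|]$.

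There is no genuine obstacle here; the one point deserving a word of justification is the interchange of expectation with the Hadamard product and with the quadratic form, which is legitimate precisely because a sampling has finite support, so linearity of expectation applies termwise. Alternatively, one can bypass this entirely by expanding each side entrywise over pairs $(i,i')\in[n]\times[n]$ and summing, exactly as in the alternative derivation of \eqref{eq:sjdud7xxx} recorded in the footnote to the statement.
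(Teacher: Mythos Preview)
Your proposal is correct and follows essentially the same route as the paper: you derive \eqref{eq:sj78jdi} from Theorem~\ref{thm:8sy98ys} via linearity of the Hadamard product and \eqref{eq:sig98s445566}, then obtain \eqref{eq:sjdud7xxx} by sandwiching with $h$ and using \eqref{eq:sig98s445566}--\eqref{eq:ks43947}, and the remaining four identities by the same specializations ($\bM=\bE$, $h=e$, and a direct computation for \eqref{eq:js9s0sj}). The only cosmetic difference is that the paper expands \eqref{eq:js9s0sj} over subsets $S$ rather than invoking indicator variables, but these are the same calculation.
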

\begin{proof}
Since multiplying  a matrix in the Hadamard sense by a fixed matrix  is a linear operation,
\[\bP\circ \bM \;\; \overset{\eqref{eq:s989shsos}}{=} \;\; \Exp\left[\bE_{[\hat{S}]}\right] \circ \bM \;\;=\;\; \Exp\left[\bE_{[\hat{S}]}\circ  \bM\right]  \;\;\overset{\eqref{eq:sig98s445566}}{=}\;\; \Exp \left[\bM_{[\hat{S}]}\right].\] 
Next, identity \eqref{eq:sjdud7xxx} follows from \eqref{eq:sj78jdi}:
\begin{eqnarray*}
h^\top (\bP \circ \bM) h &=& h^\top \Exp \left[\bM_{[\hat{S}]}\right] h \;\;
=\;\; \Exp   \left[ h^\top \bM_{[\hat{S}]}h \right] \;\; \overset{\eqref{eq:sig98s445566}}{=}  \;\; \Exp   \left[ h^\top \bI_{[\hat{S}]} \bM \bI_{[\hat{S}]} h \right] 
\overset{\eqref{eq:ks43947}}{=}  \Exp\left[h_{[\hat{S}]}^\top \bM h_{[\hat{S}]}\right].
\end{eqnarray*}
Identity \eqref{eq:jsu8js8} follows from \eqref{eq:sjdud7xxx} by setting $\bM=\bE$: 
\[\Exp\left[h_{[\hat{S}]}^\top \bE h_{[\hat{S}]}\right] = \Exp\left[ \sum_{i,j\in \hat{S}} h_i h_j \right] = \Exp \left[ \Big(\sum_{i \in \hat{S}} h_i\Big)^2\right].\]
Identity \eqref{eq:js9s0sj} holds since
\[\sum_i \bP_{ii} h_i =\sum_i \sum_{S: i\in S}\Prob(\hat{S}=S) h_i  = \sum_{S\subseteq [n]} \Prob(\hat{S}=S)  \sum_{i\in S} h_i = \Exp \left[ \sum_{i\in \hat{S}} h_i\right].\]
Finally, \eqref{eq:is8js8sos0}  (resp.  \eqref{eq:is8js8sos0trace}) follows from  \eqref{eq:jsu8js8} (resp. \eqref{eq:js9s0sj}) by setting $h=e$.
\end{proof}

If $\hat{S}$ is a uniform sampling (i.e., if $\Prob(i\in \hat{S})=\Prob(j\in \hat{S})$ for all $i,j\in [n]$), then from \eqref{eq:is8js8sos0trace} we deduce that for all $i\in [n]$:
\begin{equation}\label{eq:uniform_xx} p_i \equiv \Prob(i\in \hat{S}) \equiv \bP_{ii} = \frac{\Exp [|\hat{S}|]}{n}.\end{equation}

\subsection{Operations with samplings}

We now give formulae for the probability matrix of the sampling arising as a convex combination, intersection or a restriction, in terms of the probability matrices of the constituent samplings.

\paragraph{Convex combination of samplings.} We have seen in \eqref{eq:9hs8h9h} that each sampling is a convex combination of elementary samplings. 
In view of Theorem~\ref{thm:8sy98ys},  the probability matrices of the samplings  are related the same way:
\begin{equation}\label{eq:9hs8h9hxxx}\bP(\hat{S}) = \sum_{S\subseteq [n]} \Prob(\hat{S}=S) \bP(\hat{E}_S).\end{equation}

More generally, as formalized in the following lemma, the probability matrix of a convex combination of samplings is equal to the convex combination of the probability matrices of these samplings.

\begin{lemma} \label{lem:convex_comb} Let $\hat{S}_1, \dots, \hat{S}_k$ be samplings and $q_1,\dots,q_k$ be non-negative scalars summing up to 1.
Then 
\begin{equation}\label{eq:s98h9s8hspp}
\bP\left(\sum_{t=1}^k q_t \hat{S}_t\right) = \sum_{t=1}^k q_t \bP(\hat{S}_t).
\end{equation}
\end{lemma}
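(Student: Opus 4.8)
The plan is to reduce the statement about the convex combination sampling $\hat{S} = \sum_{t=1}^k q_t \hat{S}_t$ to a pointwise (entrywise) computation using the definition of the probability matrix together with the defining property \eqref{eq:convex_comb} of a convex combination of samplings. Fix indices $i,j \in [n]$. By definition, $\bP\left(\sum_{t=1}^k q_t \hat{S}_t\right)_{ij} = \Prob\left(\{i,j\} \subseteq \sum_{t=1}^k q_t \hat{S}_t\right)$, and the event $\{i,j\}\subseteq \hat{S}$ is the union over all sets $S\subseteq[n]$ with $\{i,j\}\subseteq S$ of the events $\{\hat{S}=S\}$. So this probability equals $\sum_{S:\{i,j\}\subseteq S}\Prob(\hat{S}=S)$, and now I substitute \eqref{eq:convex_comb}: $\Prob(\hat{S}=S) = \sum_{t=1}^k q_t \Prob(\hat{S}_t=S)$.

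Next I interchange the two finite sums, which is trivially justified, to get $\sum_{t=1}^k q_t \sum_{S:\{i,j\}\subseteq S}\Prob(\hat{S}_t=S) = \sum_{t=1}^k q_t \Prob(\{i,j\}\subseteq \hat{S}_t) = \sum_{t=1}^k q_t \bP(\hat{S}_t)_{ij}$. Since this holds for every pair $(i,j)$, the matrix identity \eqref{eq:s98h9s8hspp} follows. Alternatively, and perhaps more cleanly, I can invoke Theorem~\ref{thm:8sy98ys}: since $\bP(\hat{S}) = \Exp[\bE_{[\hat{S}]}]$ and $\hat{S} = \sum_t q_t \hat{S}_t$ means exactly that the law of $\hat{S}$ is the $q$-weighted mixture of the laws of the $\hat{S}_t$, linearity of expectation (over this mixture) gives $\Exp[\bE_{[\hat{S}]}] = \sum_t q_t \Exp[\bE_{[\hat{S}_t]}] = \sum_t q_t \bP(\hat{S}_t)$ directly. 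I would likely present the first, elementary argument as the main proof and perhaps mention the second as a remark, or just use the $\bE_{[\hat{S}]}$ representation outright since it is the shortest.

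There is essentially no obstacle here: the statement is a routine consequence of linearity, and the only thing to be a little careful about is writing out the reindexing of sums correctly (summing over $S$ containing $\{i,j\}$, then swapping the order with the sum over $t$). The ``hardest'' part is purely cosmetic — deciding whether to phrase it entrywise or via the expectation representation from Theorem~\ref{thm:8sy98ys}. Given that Theorem~\ref{thm:8sy98ys} is already available and \eqref{eq:9hs8h9hxxx} is stated just above as the special case where the $\hat{S}_t$ are elementary samplings, the cleanest write-up simply notes that \eqref{eq:s98h9s8hspp} is the natural generalization and proves it in two or three lines by the mixture/linearity argument.

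\begin{proof}
Fix $i,j\in [n]$ and let $\hat{S} \eqdef \sum_{t=1}^k q_t \hat{S}_t$. By the definition of the probability matrix and of the convex combination of samplings \eqref{eq:convex_comb},
\begin{eqnarray*}
\bP(\hat{S})_{ij} &=& \Prob(\{i,j\}\subseteq \hat{S}) \;=\; \sum_{S\,:\,\{i,j\}\subseteq S} \Prob(\hat{S}=S) \;=\; \sum_{S\,:\,\{i,j\}\subseteq S} \sum_{t=1}^k q_t \Prob(\hat{S}_t=S) \\
&=& \sum_{t=1}^k q_t \sum_{S\,:\,\{i,j\}\subseteq S} \Prob(\hat{S}_t=S) \;=\; \sum_{t=1}^k q_t \Prob(\{i,j\}\subseteq \hat{S}_t) \;=\; \sum_{t=1}^k q_t \bP(\hat{S}_t)_{ij}.
\end{eqnarray*}
Since this holds for all $i,j\in[n]$, identity \eqref{eq:s98h9s8hspp} follows.
\end{proof}
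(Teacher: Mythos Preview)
Your proof is correct and essentially identical to the paper's own argument: both fix $i,j\in[n]$, expand $\Prob(\{i,j\}\subseteq\hat{S})$ as a sum over sets $S$ containing $\{i,j\}$, substitute \eqref{eq:convex_comb}, swap the two finite sums, and recognize the result as $\sum_t q_t\bP(\hat{S}_t)_{ij}$. The alternative route you sketch via Theorem~\ref{thm:8sy98ys} and linearity of expectation over the mixture is also valid, but the paper chooses the direct entrywise computation just as you do.
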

\begin{proof} Let $\hat{S}$ be the convex combination of samplings $\hat{S}_1,\dots,\hat{S}_k$ and fix any $i,j\in [n]$. By definition, \begin{eqnarray*}
(\bP(\hat{S}))_{ij} &=& \Prob(\{i,j\}\subseteq \hat{S}) \;\;=\;\; \sum_{S\subseteq [n] \;:\; \{i,j\} \subseteq S} \Prob(\hat{S}=S)\\ &\overset{\eqref{eq:convex_comb}}{=}&
\sum_{S\subseteq [n] \;:\; \{i,j\} \subseteq S} \sum_{t=1}^k q_t \Prob(\hat{S}_t = S) \;\;= \;\; \sum_{t=1}^k q_t \sum_{S\subseteq [n] \;:\; \{i,j\} \subseteq S}  \Prob(\hat{S}_t = S)\\
&=& \sum_{t=1}^k q_t \Prob(\{i,j\}\subseteq \hat{S}_t) \;\; =\;\; \sum_{t=1}^k q_t (\bP(\hat{S}_t))_{ij} \;\; = \;\; \left(\sum_{t=1}^k q_t \bP(\hat{S}_t) \right)_{ij}.
\end{eqnarray*}
\end{proof}

\paragraph{Intersection of samplings.} The probability matrix of the intersection of two independent samplings is equal to the Hadamard product of the probability matrices of these samplings. This is formalized in the following lemma.

\begin{lemma}\label{lem:IIS} Let $\hat{S}_1, \hat{S}_2$ be independent samplings. Then 
\[\bP(\hat{S}_1 \cap \hat{S}_2) = \bP(\hat{S}_1)\circ \bP(\hat{S}_2).\]
\end{lemma}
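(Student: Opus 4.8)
The plan is to compute the $(i,j)$ entry of both sides and show they agree. By definition, $\bP(\hat{S}_1 \cap \hat{S}_2)_{ij} = \Prob(\{i,j\} \subseteq \hat{S}_1 \cap \hat{S}_2)$. The key observation is that $\{i,j\} \subseteq \hat{S}_1 \cap \hat{S}_2$ holds if and only if $\{i,j\} \subseteq \hat{S}_1$ \emph{and} $\{i,j\} \subseteq \hat{S}_2$. These are two events, one depending only on $\hat{S}_1$ and the other only on $\hat{S}_2$, so by independence of the samplings the probability of their conjunction factors:
\[
\Prob(\{i,j\} \subseteq \hat{S}_1 \cap \hat{S}_2) = \Prob(\{i,j\} \subseteq \hat{S}_1)\cdot \Prob(\{i,j\} \subseteq \hat{S}_2) = \bP(\hat{S}_1)_{ij}\cdot \bP(\hat{S}_2)_{ij}.
\]
The right-hand side is precisely the $(i,j)$ entry of the Hadamard product $\bP(\hat{S}_1)\circ \bP(\hat{S}_2)$, which completes the argument.

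First I would state the set-theoretic identity $\{i,j\}\subseteq S_1 \cap S_2 \iff (\{i,j\}\subseteq S_1 \text{ and } \{i,j\}\subseteq S_2)$, valid for any fixed sets $S_1, S_2$; then apply it with $S_1 = \hat{S}_1$, $S_2 = \hat{S}_2$ to rewrite the defining event. Second, I would invoke independence: since the event $\{\{i,j\}\subseteq \hat{S}_1\}$ is $\sigma(\hat{S}_1)$-measurable and $\{\{i,j\}\subseteq \hat{S}_2\}$ is $\sigma(\hat{S}_2)$-measurable, and $\hat{S}_1, \hat{S}_2$ are independent, the joint probability factorizes into the product of the marginals. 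Third, I would recognize each factor as an entry of the respective probability matrix and conclude that the entries match for every $i,j\in[n]$, hence the matrices are equal.

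There is no real obstacle here — the statement is essentially a one-line consequence of independence once the defining event is rewritten as an intersection of two events measurable with respect to the separate samplings. The only point requiring any care is making explicit that ``independent samplings'' means the $\sigma$-algebras they generate are independent, so that the particular events $\{\{i,j\}\subseteq \hat{S}_1\}$ and $\{\{i,j\}\subseteq \hat{S}_2\}$ — which are functions of $\hat{S}_1$ and $\hat{S}_2$ respectively — are independent. Alternatively, one could derive this from Theorem~\ref{thm:8sy98ys} and Lemma~\ref{lem:convex_comb} by writing both samplings as convex combinations of elementary samplings and using bilinearity of the Hadamard product together with independence to split the double sum, but the direct entrywise computation is cleaner and shorter, so that is the route I would take.
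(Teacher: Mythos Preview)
Your proposal is correct and matches the paper's own proof essentially line for line: an entrywise computation using the set-theoretic equivalence $\{i,j\}\subseteq \hat S_1\cap\hat S_2 \iff (\{i,j\}\subseteq \hat S_1 \text{ and } \{i,j\}\subseteq \hat S_2)$, followed by independence to factor the probability. The paper's version is simply the one-line display you wrote, without the surrounding commentary on measurability or the alternative route via Theorem~\ref{thm:8sy98ys}.
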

\begin{proof}
$[\bP(\hat{S}_1\cap \hat{S}_2)]_{ij} = \Prob(\{i,j\}\subseteq \hat{S}_1\cap \hat{S}_2) = \Prob(\{i,j\} \subseteq \hat{S}_1) \Prob( \{i,j\}\subseteq \hat{S}_2 )= [\bP(\hat{S}_1)]_{ij} [\bP(\hat{S}_2)]_{ij}.
$
\end{proof}

\paragraph{Restriction.}

By Lemma~\ref{lem:IIS}, the probability matrix of the restriction of arbitrary sampling $\hat{S}$ to $J\subseteq [n]$ is given by (we give several alternative ways of writing the result):
\begin{equation}\label{eq:98shsguy8658}\bP(J\cap \hat{S}) = \bP(\hat{E}_J)\circ \bP(\hat{S}) \overset{\eqref{eq:bPEJ}}{=} \bE_{[J]}\circ \bP(\hat{S}) = \bI_{[J]} \bP(\hat{S}) \bI_{[J]}.\end{equation}
Note that $\bP(J\cap \hat{S})$ is the matrix obtained from $\bP(\hat{S})$ by keeping only elements $i,j\in J$ and zeroing out all the rest. Furthermore, by combining the formulae derived above, we get
\begin{equation}\label{eq:iuhsi655r6sft}\bP\left(J \cap \sum_{t=1}^k q_t \hat{S}_t\right) \overset{\eqref{eq:98shsguy8658}+\eqref{eq:s98h9s8hspp}}{=} \bE_{[J]} \circ \left(\sum_{t=1}^k q_t \bP(\hat{S}_t)\right) =  \sum_{t=1}^k q_t \left( \bE_{[J]} \circ \bP(\hat{S}_t)\right) \overset{\eqref{eq:98shsguy8658}}{=} \sum_{t=1}^k q_t \bP(J\cap \hat{S}_t).\end{equation}

\subsection{Probability matrix of special samplings}

The probability matrix of the $(c,\tau)$-distributed samplings is computed in the following lemma.

\begin{lemma}  \label{lem:su8dbd}
Let $\hat{S}$ be the ($c,\tau$)-distributed sampling associated with the partition $\{\pP_1,\dots,\pP_c\}$ of $[n]$ such that $s=|\pP_l|$ for $l\in [c]$ (see Definition~\ref{defn:distrib}). Then
\begin{equation}\label{eq:poiuyth778}
\bP(\hat{S}) = \frac{\tau}{s}\left[\alpha_1 \bI + \alpha_2 \bE + \alpha_3 (\bE-\bB)\right],
\end{equation}
where 
\[\alpha_1 = 1-\frac{\tau-1}{s_1}, \qquad \alpha_2 = \frac{\tau-1}{s_1}, \qquad \alpha_3 = \frac{\tau}{s} - \frac{\tau-1}{s_1},\]
$s_1=\max(s-1,1)$ and
\begin{equation}\label{eq:BBB}\bB = \sum_{l=1}^c \bP ({\hat E_{{\cal P}_l}}).\end{equation} Note that $\bB$ is the 0-1 matrix with $\bB_{ij}=1$ if and only if
 $i,j$ belong to the same partition.
\end{lemma}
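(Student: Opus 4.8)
The plan is to compute $\bP(\hat S)$ entrywise, exploiting the fact that $\hat S = \bigcup_{l=1}^c \hat S_l$ is a disjoint union of \emph{independent} $\tau$-nice samplings, one per block $\pP_l$. Fix $i,j\in[n]$. There are three cases. If $i=j$, then $\bP_{ii} = \Prob(i\in\hat S) = \Prob(i\in\hat S_{l(i)})$, where $l(i)$ is the block containing $i$; since $\hat S_{l(i)}$ is $\tau$-nice on a set of size $s$, this equals $\tau/s$. If $i\neq j$ but $i,j$ lie in the same block $\pP_l$, then $\bP_{ij} = \Prob(\{i,j\}\subseteq\hat S_l)$, which for a $\tau$-nice sampling on $s$ elements equals $\binom{s-2}{\tau-2}/\binom{s}{\tau} = \frac{\tau(\tau-1)}{s(s-1)}$ (and one checks the degenerate case $s=1$ separately, which is why $s_1=\max(s-1,1)$ appears). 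If $i,j$ lie in different blocks, then by independence $\bP_{ij} = \Prob(i\in\hat S_{l(i)})\Prob(j\in\hat S_{l(j)}) = (\tau/s)^2$. So $\bP(\hat S)$ has diagonal $\tau/s$, same-block off-diagonal entries $\frac{\tau(\tau-1)}{s s_1}$, and cross-block entries $(\tau/s)^2$.

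Next I would verify that the claimed matrix $\frac{\tau}{s}[\alpha_1\bI + \alpha_2\bE + \alpha_3(\bE-\bB)]$ has exactly these three entry values, using that $\bB_{ij}=1$ iff $i,j$ are in the same block (including the diagonal). For the diagonal: $\bI_{ii}=1$, $\bE_{ii}=1$, $(\bE-\bB)_{ii}=0$, so the entry is $\frac{\tau}{s}(\alpha_1+\alpha_2) = \frac{\tau}{s}\cdot 1 = \tau/s$ since $\alpha_1+\alpha_2 = 1$. For same-block off-diagonal: $\bI_{ij}=0$, $\bE_{ij}=1$, $(\bE-\bB)_{ij}=0$, giving $\frac{\tau}{s}\alpha_2 = \frac{\tau}{s}\cdot\frac{\tau-1}{s_1} = \frac{\tau(\tau-1)}{s s_1}$, matching. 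For cross-block: $\bI_{ij}=0$, $\bE_{ij}=1$, $(\bE-\bB)_{ij}=1$, giving $\frac{\tau}{s}(\alpha_2+\alpha_3) = \frac{\tau}{s}\cdot\frac{\tau}{s} = (\tau/s)^2$, since $\alpha_2+\alpha_3 = \frac{\tau}{s}$. All three cases agree, so the matrices coincide.

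Alternatively — and perhaps more in the spirit of the preceding development — one can assemble the result from the operation formulae already proved. Write $\hat S = \sum_{l} \hat S_l$ as an ``independent union'': each $\hat S_l$, viewed as a sampling on $[n]$, has probability matrix $\bP(\hat S_l) = \bP(\hat E_{\pP_l})\circ \bP(\hat S_l^{\mathrm{nice}})$ where $\hat S_l^{\mathrm{nice}}$ is $\tau$-nice on $\pP_l$; and for \emph{independent} samplings with disjoint supports, $\bP$ of the union decomposes as a sum of the within-block blocks plus the rank-structured cross terms $\Prob(i\in\hat S_{l(i)})\Prob(j\in\hat S_{l(j)})$. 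The $\tau$-nice probability matrix on a single block of size $s$ is itself of the form $\frac{\tau}{s}[\alpha_1\bI_{[\pP_l]} + \alpha_2\bE_{[\pP_l]}]$ by the $i=j$ and same-block computations above; summing over $l$ gives $\frac{\tau}{s}[\alpha_1\bI + \alpha_2\bB]$ for the block-diagonal part, and adding the cross-block contribution $(\tau/s)^2(\bE-\bB)$ and rearranging yields \eqref{eq:poiuyth778}.

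The only real obstacle is bookkeeping: correctly handling the degenerate block size $s=1$ (where there is no ``same-block off-diagonal'' entry, so $s_1$ is defined to avoid division by zero and $\alpha_2$ is harmless since $\bE-\bB$ then accounts for everything off-diagonal), and keeping straight which of $\bI$, $\bE$, $\bB$ contributes in each of the three cases. Everything else is the elementary combinatorics of the $\tau$-nice sampling, $\Prob(\{i_1,\dots,i_r\}\subseteq\hat S) = \binom{s-r}{\tau-r}/\binom{s}{\tau}$, plus independence across blocks.
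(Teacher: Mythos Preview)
Your proposal is correct and follows essentially the same route as the paper: compute $\bP_{ij}$ entrywise in the three cases (diagonal, same-block off-diagonal, cross-block) to get $\tfrac{\tau}{s}$, $\tfrac{\tau(\tau-1)}{s s_1}$, $\tfrac{\tau^2}{s^2}$, and then match against the linear combination $\tfrac{\tau}{s}[\alpha_1\bI+\alpha_2\bE+\alpha_3(\bE-\bB)]$. The paper's writeup is terser (it writes $\bP=\tfrac{\tau}{s}\bI+\tfrac{\tau(\tau-1)}{s s_1}(\bB-\bI)+\tfrac{\tau^2}{s^2}(\bE-\bB)$ and simplifies), whereas you verify the target expression case by case and spell out the $s=1$ degeneracy more carefully, but the substance is identical; your alternative ``independent union'' assembly is extra and not used in the paper.
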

\begin{proof}
 Let $\bP=\bP(\hat{S})$. It is easy to see that 
\begin{equation*}
 \bP_{ij}=\left\{\begin{array}{ll}
  \frac{\tau}{s} & \mathrm{if~~}  i=j\\
\frac{\tau(\tau-1)}{s s_1}  & \mathrm{if~~}  i\neq j \mathrm{~and~} i,j\in \pP_l \mathrm{~for~some~} l\in[c]\\
\frac{\tau^2}{s^2}  & \mathrm{otherwise~~}  \\
 \end{array}\right.
\end{equation*}
Hence,
\[\bP=\frac{\tau}{s}\bI+\frac{\tau(\tau-1)}{s s_1}(\bB-\bI)+\frac{\tau^2}{s^2}(\bE-\bB)=\frac{\tau}{s}
[\alpha_1 \bI+\alpha_2 \bE+\alpha_3(\bE-\bB)].\qedhere\]
\end{proof}

As a corollary of the above in the  $c=1$ case we obtain the probability matrix of the $\tau$-nice sampling:

\begin{lemma} \label{lem:prob_m_of_tau_nice}
Fix $1\leq \tau \leq n$ and let $\hat{S}$ be the $\tau$-nice sampling. Then
\begin{equation}\label{eq:jhs6789hs8s}\bP(\hat{S}) = \frac{\tau}{n} \left( (1- \beta) \bI + \beta \bE \right),\end{equation}
where $\beta = (\tau-1)/\max(n-1,1)$. If $\tau=0$, then $\bP(\hat{S})$ is the zero matrix.
\end{lemma}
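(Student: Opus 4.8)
The plan is to obtain this as a direct corollary of Lemma~\ref{lem:su8dbd} by specializing to $c=1$. When $c=1$, the partition $\{\mathcal{P}_1\}$ of $[n]$ consists of a single block $\mathcal{P}_1 = [n]$, so $s = |\mathcal{P}_1| = n$, and the $(c,\tau)$-distributed sampling $\hat{S} = \bigcup_{l=1}^{c}\hat{S}_l$ reduces to the single $\tau$-nice sampling $\hat{S}_1$ on $[n]$. Thus the $\tau$-nice sampling is literally the $(1,\tau)$-distributed sampling, and formula \eqref{eq:poiuyth778} applies verbatim.

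First I would substitute $s = n$ and $s_1 = \max(n-1,1)$ into the expressions for $\alpha_1,\alpha_2,\alpha_3$ from Lemma~\ref{lem:su8dbd}. Next I would observe that the matrix $\bB$ defined in \eqref{eq:BBB} becomes $\bB = \bP(\hat{E}_{\mathcal{P}_1}) = \bP(\hat{E}_{[n]}) = \bE_{[[n]]} = \bE$, since the single block is all of $[n]$. Consequently the term $\alpha_3(\bE - \bB)$ vanishes identically, and \eqref{eq:poiuyth778} collapses to $\bP(\hat{S}) = \tfrac{\tau}{n}\bigl(\alpha_1 \bI + \alpha_2 \bE\bigr)$ with $\alpha_1 = 1 - \tfrac{\tau-1}{\max(n-1,1)}$ and $\alpha_2 = \tfrac{\tau-1}{\max(n-1,1)}$. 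Setting $\beta = (\tau-1)/\max(n-1,1)$ gives exactly \eqref{eq:jhs6789hs8s}, since $\alpha_1 = 1-\beta$ and $\alpha_2 = \beta$.

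Alternatively, for a self-contained argument not invoking Lemma~\ref{lem:su8dbd}, I would compute $\bP_{ij}$ directly: for $i = j$, $\bP_{ii} = \Prob(i \in \hat{S}) = \binom{n-1}{\tau-1}/\binom{n}{\tau} = \tau/n$; and for $i \neq j$, $\bP_{ij} = \Prob(\{i,j\}\subseteq\hat{S}) = \binom{n-2}{\tau-2}/\binom{n}{\tau} = \tfrac{\tau(\tau-1)}{n(n-1)}$ (when $n \geq 2$), which matches $\tfrac{\tau}{n}\beta$. Then $\bP = \tfrac{\tau}{n}(1-\beta)\bI + \tfrac{\tau}{n}\beta\bE$ by reading off the diagonal and off-diagonal entries. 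For the edge case $\tau = 0$, the $\tau$-nice sampling is the nil sampling $\hat{E}_\emptyset$, so by \eqref{eq:bPEJ} its probability matrix is $\bE_{[\emptyset]} = 0$, the zero matrix. I would also note the harmless boundary behavior when $n = 1$: then $\max(n-1,1) = 1$, necessarily $\tau \in \{0,1\}$, and for $\tau = 1$ we get $\beta = 0$ and $\bP = \bI = [1]$, consistent with $\hat{S} = \hat{E}_{\{1\}}$.

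There is essentially no obstacle here; the only points requiring a moment of care are the reduction $\bB = \bE$ when $c=1$ (so that the $\alpha_3$ term disappears), and the definition of $s_1 = \max(s-1,1)$ specializing to $\max(n-1,1)$, which is exactly the quantity appearing in $\beta$. The $\tau = 0$ (and $n=1$) cases are trivial once one recalls that a $0$-nice sampling is nil.
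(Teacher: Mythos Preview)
Your proposal is correct and follows exactly the paper's approach: specialize Lemma~\ref{lem:su8dbd} to $c=1$, noting that then $\mathcal{P}_1=[n]$, $s=n$, and $\bB=\bE$ so the $\alpha_3$ term drops. Your additional direct computation and careful handling of the $\tau=0$ and $n=1$ boundary cases are fine extras but not required.
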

\begin{proof} For $\tau\geq 1$  this follows from Lemma~\ref{lem:su8dbd} in the special case when $c=1$ (note that ${\cal P}_1=[n], s=n$ and $\bB=\bE$). 
\end{proof}

Finally, we compute the probability matrix of a doubly uniform sampling.

\begin{lemma} \label{lem:ius98ss}Let $\hat{S}$ be a doubly uniform sampling and assume it is not nil (i.e., assume that $\Prob(\hat{S}=\emptyset)
\neq 1$). Then
\begin{equation}\label{eq:du_prob_marix}
\bP(\hat{S}) = \frac{\Exp[|\hat{S}|]}{n} \left( (1 - \beta ) \bI + \beta\bE \right),
\end{equation}
where
\begin{equation}\label{eq:beta}\beta = \left(\tfrac{\Exp[|\hat{S}|^2]}{\Exp[|\hat{S}|]} - 1\right)/\max(n-1,1).\end{equation}
\end{lemma}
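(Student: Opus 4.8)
The plan is to derive \eqref{eq:du_prob_marix} by combining Proposition~\ref{prop:DU} with the already-established formula \eqref{eq:jhs6789hs8s} for the probability matrix of the $\tau$-nice sampling, and then simplifying via Lemma~\ref{lem:convex_comb}. Concretely, since $\hat S$ is doubly uniform, Proposition~\ref{prop:DU} gives $\hat S = \sum_{\tau=0}^n q_\tau \hat S_\tau$ with $q_\tau = \Prob(|\hat S|=\tau)$, where $\hat S_\tau$ is the $\tau$-nice sampling. Applying Lemma~\ref{lem:convex_comb} yields $\bP(\hat S) = \sum_{\tau=0}^n q_\tau \bP(\hat S_\tau)$. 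Now substitute the closed form from Lemma~\ref{lem:prob_m_of_tau_nice}: for $\tau\geq 1$, $\bP(\hat S_\tau) = \tfrac{\tau}{n}\big((1-\beta_\tau)\bI + \beta_\tau \bE\big)$ with $\beta_\tau = (\tau-1)/\max(n-1,1)$, and for $\tau=0$ the matrix is zero, which is consistent with plugging $\tau=0$ into the same expression. Hence $\bP(\hat S) = \sum_{\tau=0}^n q_\tau \tfrac{\tau}{n}\big((1-\beta_\tau)\bI + \beta_\tau\bE\big)$.

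The remaining work is purely arithmetic bookkeeping: expand and regroup the coefficients of $\bI$ and $\bE$. Write $\bP(\hat S) = a\,\bI + b\,\bE$ where $a = \tfrac{1}{n}\sum_\tau q_\tau \tau(1-\beta_\tau)$ and $b = \tfrac{1}{n}\sum_\tau q_\tau \tau\beta_\tau$. Using $\tau\beta_\tau = \tau(\tau-1)/\max(n-1,1)$ and $\sum_\tau q_\tau \tau = \Exp[|\hat S|]$, $\sum_\tau q_\tau \tau(\tau-1) = \Exp[|\hat S|^2] - \Exp[|\hat S|]$, one gets $a+b = \tfrac{1}{n}\Exp[|\hat S|]$ (the total coefficient, since $(1-\beta_\tau)+\beta_\tau=1$) and $b = \tfrac{1}{n\max(n-1,1)}\big(\Exp[|\hat S|^2]-\Exp[|\hat S|]\big)$. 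Defining $\beta$ as in \eqref{eq:beta} so that $b = \tfrac{\Exp[|\hat S|]}{n}\beta$ and $a = \tfrac{\Exp[|\hat S|]}{n}(1-\beta)$ gives exactly \eqref{eq:du_prob_marix}. The non-nil hypothesis guarantees $\Exp[|\hat S|] > 0$, so dividing by it to define $\beta$ is legitimate.

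I do not anticipate a genuine obstacle here — this is a short computation once the two prior lemmas are invoked. The only mild subtlety is handling the degenerate cases cleanly: checking that the $\tau=0$ term (zero matrix) is correctly absorbed into the common formula, and that when $n=1$ the convention $\max(n-1,1)=1$ keeps all expressions well-defined (in that case $\hat S$ is either nil or selects $\{1\}$ with probability one, and both sides reduce to a scalar). One should also note that $\beta$ as defined lies in $[0,1]$, which is what makes the convex-combination structure consistent, though this is not strictly needed for the identity itself. I would present the proof in essentially the order above: invoke Proposition~\ref{prop:DU}, apply Lemma~\ref{lem:convex_comb}, substitute Lemma~\ref{lem:prob_m_of_tau_nice}, and collect coefficients using $\Exp[|\hat S|]$ and $\Exp[|\hat S|^2]$.
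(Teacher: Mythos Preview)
Your proposal is correct and follows exactly the same route as the paper: invoke Proposition~\ref{prop:DU} to write $\hat S$ as a convex combination of $\tau$-nice samplings, apply Lemma~\ref{lem:convex_comb} to get $\bP(\hat S)=\sum_\tau q_\tau \bP(\hat S_\tau)$, substitute the formula from Lemma~\ref{lem:prob_m_of_tau_nice}, and rearrange. In fact you supply more of the arithmetic detail than the paper's own proof, which simply says ``it only remains to combine Lemma~\ref{lem:convex_comb} and Lemma~\ref{lem:prob_m_of_tau_nice} and rearrange the result.''
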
 
\begin{proof} Letting $q_\tau = \Prob(|\hat{S}|=\tau)$, by Proposition~\ref{prop:DU} we can write
$\hat{S} = \sum_{\tau=0}^n q_\tau\hat{S}_\tau$, 
where $\hat{S}_\tau$ is the $\tau$-nice sampling. It only remains to combine Lemma~\ref{lem:convex_comb} and Lemma~\ref{lem:prob_m_of_tau_nice} and rearrange the result.
\end{proof}

Note that Lemma~\ref{lem:prob_m_of_tau_nice} is a special case of Lemma~\ref{lem:ius98ss} (covering the case  when $\Prob(|\hat{S}|=\tau)=1$ for some $\tau$).

\section{Largest eigenvalues of the probability matrix} \label{sec:eig}

For an $n\times n$ positive semidefinite matrix $\bM$ we denote by $\lambda(\bM)$ the largest eigenvalue of $\bM$:
\begin{equation}\label{eq:lambdaM} \lambda(\bM) \eqdef \max_{h\in \R^n} \{h^\top \bM h \;:\; h^\top  h \leq 1\}.\end{equation}
For a vector $v\in \R ^n$, let $\Diag(v)$ be the diagonal matrix with  $v$ on the diagonal. For an $n$-by-$n$ matrix $\bM$, $\Diag(\bM)$ denotes the diagonal matrix containing the diagonal of $\bM$.  By $\lambda'(\bM)$ we shall denote the ``normalized'' largest eigenvalue of $\bM$: 
\begin{equation} \label{eq:lambda'M}\lambda'(\bM) \eqdef \max_{h\in \R^n} \{h^\top \bM h \;:\; h^\top \Diag(\bM) h \leq 1\}. \end{equation}
Note that $1\leq \lambda'(\bM)\leq n$.

In this section we study (standard and normalized) largest eigenvalue of the probability matrix associated with a sampling:
\begin{equation}\label{eq:98s98soiuoiuxxx} \lambda(\hat{S}) \eqdef  \lambda(\bP(\hat{S})) \overset{\eqref{eq:lambdaM}}{=} \max_{h\in \R^n} \{h^\top \bP(\hat{S}) h \;:\; h^\top  h \leq 1\} \end{equation} and 
\begin{equation}\label{eq:98s98soiuoiu} \lambda'(\hat{S}) \eqdef \lambda'( \bP(\hat{S}) ) \overset{\eqref{eq:lambda'M}}{=} \max_{h\in \R^n} \{h^\top \bP(\hat{S}) h \;:\; h^\top \Diag(\bP(\hat{S})) h \leq 1\}. \end{equation} 
Recall that by Theorem~\ref{thm:8sy98ys}, $\bP(\hat{S})$ is positive semidefinite for each sampling $\hat{S}$.  For convenience, we write $\lambda(\hat{S})$ (resp. $\lambda'(\hat{S})$) instead of $\lambda(\bP(\hat{S}))$  (resp. $\lambda'(\bP(\hat{S}))$). We study these quantities since, as we will show in later sections, they are useful in computing parameter $v=(v_1,\dots,v_n)$ for which ESO  holds.  

If $\hat{S}$ is a uniform sampling  (i.e., if $\Prob(i\in \hat{S})=\Prob(j\in \hat{S})$ for all $i,j\in [n]$), then since $\Tr(\bP(\hat{S}))=\Exp[|\hat{S}|]$ (see \eqref{eq:uniform_xx}), we have $\Diag(\bP(\hat{S}))=\tfrac{\Exp[|\hat{S}|]}{n} \bI$, from which we obtain (assuming that $\hat{S}$ is not nil):
\begin{equation}\label{eq:uniform_eigs_009}
\lambda'(\hat{S}) = \frac{n}{\Exp[|\hat{S}|]}\lambda(\hat{S}).
\end{equation}

\subsection{Elementary samplings}

In the case of elementary samplings the situation is simple. Indeed, for any $J\subseteq [n]$, we have
\begin{align}\label{a-PEJ}
\lambda'(\hat{E}_J)  = \lambda(\hat E_J)\overset{\eqref{eq:bPEJ}}{=} \lambda(e_{[J]} e_{[J]}^\top) = e_{[J]}^\top e_{[J]} =|J|.
\end{align}
This can, in fact, be seen as a consequence of a more general identity\footnote{The proof is immediate: fixing $x$, for any $h\in \R^n$ we have $(h^\top x)^2 = ((h\circ x)^\top e )^2 = ((h\circ x)^\top e_{[S]} )^2$, where $e$ is the vector of all ones, $S=\{i\;:\; x_i\neq 0\}$ and the entries of $e_{[S]}$ are 1 for $i\in S$ and 0 otherwise. It only remains to apply the Cauchy-Schwartz inequality, which  is attained,  whence the identity.} for arbitrary symmetric rank one matrices: for any $x\in \R^n$, we have
\begin{equation}\label{eq:09s09shjyu}\lambda'(xx^\top) = \|x\|_{0} \eqdef  |\{i\;:\; x_i\neq 0\}|.\end{equation}
Since $\bP(\hat{E}_J) = \bE_{[J]}$ and $\Diag(\bE_{[J]})=\bI_{[J]}$,  \eqref{a-PEJ} can equivalently be written as
\begin{align}\label{a-PEJ-equiv}
\bE_{[J]} \preceq |J| \bI_{[J]},
\end{align}
and adding that the bound is tight.

\subsection{Bounds for arbitrary samplings}

In the first result of this section we give sharp bounds for $\lambda'(\hat{S})$ for arbitrary sampling $\hat{S}$.

\begin{theo} \label{th-lambda} Let $\hat{S}$ be an arbitrary sampling. 
\begin{enumerate}
\item[(i)] {\bf Lower bound.} If $\hat{S}$ is not nil (i.e., if $\Prob(\hat{S}\neq \emptyset)>0$), then
 \[1\leq \frac{\Exp[|\hat{S}|^2]}{\Exp[|\hat{S}|]} \leq \lambda'(\hat{S})
.\] 
\item[(ii)] {\bf Upper bound.} If $\tau$ is a constant such that $|\hat{S}|\leq \tau$ with probability 1, then
$\lambda'(\hat{S}) \leq \tau.$
\item[(iii)] {\bf Identity.} 
If $|\hat{S}|=\tau$ with probability 1, then
$\lambda'(\hat{S}) = \tau$.
\end{enumerate}
\end{theo}
\begin{proof}
\begin{enumerate}
\item[(i)] For simplicity, let $\bP = \bP(\hat{S})$. 
If $e\in \R^ n$ is the vector of all ones, then 
we get 
\[\lambda'(\hat{S}) \;\; \overset{\eqref{eq:98s98soiuoiu} }{\geq} \;\;
\frac{e^\top \bP e}{e^\top \Diag(\bP)e} \;\;=\;\; \frac{e^\top \bP e}{\Tr(\bP)} \;\; \geq \;\; 1,\]
where the last inequality holds since $\Tr(\bP)$ is upper bounded by the sum of all elements of $\bP$. It remains to apply identities
\eqref{eq:is8js8sos0} and \eqref{eq:is8js8sos0trace}.

\item[(ii)] In view of \eqref{eq:9hs8h9h}, we can represent $\hat{S}$ as a convex combination of elementary samplings:
$$\hat{S}=\sum_{S\subseteq [n]} q_S \hat{E}_S,$$ where $q_S=\Prob(\hat{S}=S)$. Since $|\hat{S}|\leq \tau$ with probability 1, we have $|S|\leq \tau$ whenever $q_S>0$.
Thus we  have 
\begin{equation*}
\bP(\hat{S}) = \sum_{S \subseteq [n]} q_S \bP(\hat{E}_S)
\overset{\eqref{a-PEJ}}{\preceq} \sum_{S \subseteq [n]} q_S | S| \Diag(\bP(\hat{E}_S)) \preceq \tau \sum_{S \subseteq [n]} q_S  \Diag(\bP(\hat{E}_S)) \overset{\eqref{eq:9hs8h9hxxx}}{=}\tau \Diag(\bP(\hat S)).
\end{equation*}

\item[(iii)] The result follows by combining the upper and lower bounds. \qedhere
\end{enumerate}
\end{proof}

In the next result we study the quantity $\lambda(\hat{S})$.

\begin{theo}  The following statements hold:
\begin{enumerate}
\item[(i)] \textbf{Lower and upper bounds.} For any sampling $\hat{S}$ we have \begin{equation}\label{eq:iuisuh77645fgh}\frac{\Exp[|\hat{S}|^2]}{n} \leq \lambda(\hat{S}) \leq \Exp[|\hat{S}|].\end{equation}
\item[(i)] \textbf{Sharper upper bound.} If $\hat{S}$ is uniform and $|\hat{S}|\leq \tau$ with probability one, then the upper bound can be improved to
\[\lambda(\hat{S}) \leq \frac{\Exp[|\hat{S}|]\tau}{n}.\]
\item[(iii)] \textbf{Identity.} If $\hat{S}$ is uniform and $|\hat{S}|=\tau$ with probability one, then 
\[\lambda(\hat{S}) = \frac{\tau^2}{n}.\]
\end{enumerate}
\end{theo}
\begin{proof}
\begin{itemize}
\item[(i)] The upper bound holds since $\lambda(\hat{S})$ is the maximal eigenvalue of $\bP(\hat{S})$ and by \eqref{eq:uniform_xx}, $\Exp[|\hat{S}|]=\Tr(\bP(\hat{S}))$. The 
lower bound follows from:
\[\lambda(\hat{S}) = \lambda(\bP(\hat{S})) \;\;\geq \;\; \frac{e^\top \bP(\hat{S}) e}{ e^\top e} \;\;\overset{\eqref{eq:is8js8sos0}}{=}\;\; \frac{\Exp[|\hat{S}|^2]}{n}.\]
\item[(ii)] By combining \eqref{eq:uniform_eigs_009} and Theorem~\ref{th-lambda} (ii) we obtain:
\[\lambda(\hat{S}) \;\;\overset{\eqref{eq:uniform_eigs_009}}{=}\;\; \frac{\Exp[|\hat{S}|]}{n}\lambda'(\hat{S})\;\;\overset{\text{Thm}~\ref{th-lambda}}{\leq}\;\; \frac{\Exp[|\hat{S}|]\tau}{n}.\]
\item[(iii)] The result follows by combining the lower bound from (i) with the upper bound in (ii).\qedhere
\end{itemize}
\end{proof}

A natural lower bound for $\lambda(\hat{S})$ (largest eigenvalue of $\bP(\hat{S})$) is  $\Exp[|\hat{S}|]/n$ (the average of the eigenvalues of $\bP(\hat{S})$). Notice that the lower bound in \eqref{eq:iuisuh77645fgh} is better than this. Moreover, observe  that both bounds in \eqref{eq:iuisuh77645fgh}  are tight. Indeed, in view of \eqref{a-PEJ}, the upper bound is achieved for any elementary sampling. The lower bound is also tight -- in view of  part (iii) of the theorem.

\subsection{Bounds for restrictions of selected samplings} \label{subsec:lambda'_restrict}

In this part we study the normalized eigenvalue associated with the   restriction of a few selected samplings (or families of samplings). In particular, we first give a (necessarily rough)  bound that holds for arbitrary  samplings,  followed by a  bound for the  $(c,\tau)$-distributed sampling  and the  $\tau$-nice sampling (both are  specific uniform samplings). Finally, we give a bound for the family of doubly uniform samplings. 

\begin{prop}\label{prop-uniform-xxjs}Let $\hat{S}$ be an arbitrary sampling and let $\tau$ be such that $|\hat{S}|\leq \tau$ with probability 1. Then for all $\emptyset \neq J\subseteq [n]$, we have
\begin{equation}\label{eq:9s898s5r768s}
\lambda'(J\cap \hat{S}) \leq \min\{|J|,\tau\}.
\end{equation}
\end{prop}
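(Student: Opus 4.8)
The restriction $J \cap \hat{S}$ is itself a sampling, with $|J \cap \hat{S}| \le |\hat{S}| \le \tau$ with probability 1, and also $|J \cap \hat{S}| \le |J|$ with probability 1. So $|J \cap \hat{S}| \le \min\{|J|,\tau\}$ with probability 1, and the bound follows directly from Theorem~\ref{th-lambda}(ii) applied to the sampling $J \cap \hat{S}$ with the constant $\tau' = \min\{|J|,\tau\}$. That is essentially the whole argument.

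Let me spell out the key steps. First I would observe that $J \cap \hat{S} = \hat{E}_J \cap \hat{S}$ is a well-defined sampling (Definition~\ref{def:restriction}), whose realized sets are subsets of $J$. Second, I would verify the cardinality bound: for any realization $S$ of $\hat{S}$, the corresponding realization of the restriction is $J \cap S$, and $|J \cap S| \le |J|$ trivially while $|J \cap S| \le |S| \le \tau$ by hypothesis; hence $|J \cap \hat{S}| \le \min\{|J|,\tau\}$ almost surely. Third, I would invoke Theorem~\ref{th-lambda}(ii): since the sampling $J \cap \hat{S}$ has cardinality bounded almost surely by the constant $\min\{|J|,\tau\}$, its normalized largest eigenvalue $\lambda'(J \cap \hat{S})$ is at most $\min\{|J|,\tau\}$. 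This gives \eqref{eq:9s898s5r768s}.

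One subtlety worth a sentence in the writeup: Theorem~\ref{th-lambda}(ii) and the definition of $\lambda'$ presuppose that the relevant diagonal entries are nonzero (otherwise the constraint $h^\top \Diag(\bP) h \le 1$ behaves degenerately on those coordinates). Here $\bP(J \cap \hat{S})$ has zero rows and columns for $i \notin J$, so $\lambda'(J \cap \hat{S})$ should be understood as the largest eigenvalue of the principal submatrix indexed by $\{i \in J : \Prob(i \in \hat{S}) > 0\}$ — equivalently, one works with the restricted sampling viewed as a sampling on its effective support. The inequality in Theorem~\ref{th-lambda}(ii) still applies verbatim once one restricts attention to that support, since the proof there only uses the convex-combination representation $\bP(J \cap \hat{S}) = \sum_S q_S \bP(\hat{E}_{J \cap S})$ together with $\bE_{[T]} \preceq |T|\,\bI_{[T]}$ from \eqref{a-PEJ-equiv}, and $|J \cap S| \le \min\{|J|,\tau\}$ whenever $q_S > 0$.

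There is no real obstacle here — the statement is a clean corollary. If anything, the only thing requiring care is bookkeeping around zero diagonal entries / improper restricted samplings, but this is a matter of interpreting the definition of $\lambda'$ on the support rather than a genuine difficulty; I would dispatch it with the one-line observation that the restriction inherits the uniform cardinality-type bound $|J \cap \hat{S}| \le \min\{|J|,\tau\}$ and then cite Theorem~\ref{th-lambda}(ii) directly.
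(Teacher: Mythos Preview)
Your proof is correct and follows essentially the same approach as the paper: observe that $|J\cap \hat S|\le \min\{|J|,\tau\}$ with probability~1 and then invoke Theorem~\ref{th-lambda}(ii) for the restricted sampling. Your extra remark about zero diagonal entries is harmless bookkeeping and not needed for the argument as stated in the paper.
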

\begin{proof}\footnote{This simple result can alternatively be proved by applying \eqref{a-ABCxxx} (which we mention in a later section) together with \eqref{eq:98shsguy8658}, \eqref{a-PEJ} and the upper bound in Theorem~\ref{th-lambda}.} Note that $|J\cap \hat{S}| \leq \min\{|J|,\tau\}$ with probability 1. We only need to apply the upper bound in Theorem~\ref{th-lambda} to the restriction sampling $J\cap \hat{S}$.
\end{proof}

We now proceed to the $(c,\tau)$-distributed sampling (recall Definition~\ref{defn:distrib}).

\begin{prop}\label{prop-distrilambda}
Let $\hat S$ be the ($c,\tau$)-distributed sampling associated with a partition $\{\pP_1,\dots,\pP_c\}$ of $[n]$ such that $s=|\pP_l|$ for $l\in [c]$. Fix arbitrary $\emptyset \neq J\subseteq[n]$ and let $\omega'$ be the number of sets $\pP_l$ which have a nonempty intersection with $J$; that is, let
$\omega'\eqdef |\{l: J\cap \pP_l \neq \emptyset \}|$.
Then
\begin{equation}
\label{eq:kjhas7gbs}
\lambda'(J \cap \hat{S}) \leq  1+ \frac{(|J|-1)(\tau-1)}{s_1} + |J|
\left(\frac{\tau}{s}-\frac{\tau-1}{s_1}\right)\frac{\omega'-1}{\omega'},\end{equation}
where $s_1=\max(s-1,1)$.
\end{prop}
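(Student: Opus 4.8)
The plan is to reduce the eigenvalue estimate to an explicit spectral analysis of the matrix $\bP(J\cap\hat S)$, which by \eqref{eq:98shsguy8658} equals $\bI_{[J]}\bP(\hat S)\bI_{[J]}$, i.e. the principal submatrix of $\bP(\hat S)$ indexed by $J$. First I would invoke Lemma~\ref{lem:su8dbd} to write $\bP(\hat S) = \tfrac{\tau}{s}\left[\alpha_1\bI + \alpha_2\bE + \alpha_3(\bE-\bB)\right]$, and then restrict to $J$: since Hadamard restriction is linear, $\bP(J\cap\hat S) = \tfrac{\tau}{s}\left[\alpha_1\bI_{[J]} + \alpha_2\bE_{[J]} + \alpha_3(\bE_{[J]}-\bB_{[J]})\right]$, where $\bB_{[J]}$ is the block-diagonal $0$--$1$ matrix recording which pairs of indices in $J$ lie in a common part $\pP_l$. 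The diagonal of this matrix is $\tfrac{\tau}{s}(\alpha_1+\alpha_2)\bI_{[J]} = \tfrac{\tau}{s}\bI_{[J]}$ (using $\alpha_1+\alpha_2=1$), so computing $\lambda'(J\cap\hat S)$ amounts to bounding $h^\top\bP(J\cap\hat S)h$ over $h$ supported on $J$ with $\tfrac{\tau}{s}\|h_{[J]}\|^2\le 1$, i.e. to bounding $\tfrac{s}{\tau}\lambda(\bP(J\cap\hat S))$ restricted to the subspace $\operatorname{span}\{e_i: i\in J\}$.

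Next I would bound the quadratic form termwise. For $h$ supported on $J$: the $\alpha_1\bI_{[J]}$ term contributes $\alpha_1\|h\|^2$; the $\alpha_2\bE_{[J]}$ term contributes $\alpha_2(\sum_{i\in J}h_i)^2 \le \alpha_2|J|\|h\|^2$ by Cauchy--Schwarz; and the crucial $\alpha_3(\bE_{[J]}-\bB_{[J]})$ term I would handle by noting that $\bE_{[J]}-\bB_{[J]} = \sum_{l\neq l'} \bE_{[J\cap\pP_l],[J\cap\pP_{l'}]}$ records cross-part pairs only. Grouping $h$ by the parts, write $g_l \eqdef \sum_{i\in J\cap\pP_l} h_i$; then $h^\top(\bE_{[J]}-\bB_{[J]})h = \big(\sum_l g_l\big)^2 - \sum_l g_l^2$. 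Over the $\omega'$ parts that actually meet $J$, Lagrange/Cauchy--Schwarz gives $\big(\sum_l g_l\big)^2 - \sum_l g_l^2 \le \tfrac{\omega'-1}{\omega'}\big(\sum_l g_l\big)^2 \le \tfrac{\omega'-1}{\omega'}\,\omega'\sum_l g_l^2$, hmm — more carefully, $\big(\sum g_l\big)^2 \le \omega'\sum g_l^2$, so $h^\top(\bE_{[J]}-\bB_{[J]})h \le (\omega'-1)\sum_l g_l^2 \le (\omega'-1)\sum_{i\in J} h_i^2\cdot\max_l|J\cap\pP_l|$? That last step is too lossy; instead I would use $\sum_l g_l^2 \le |J|\|h\|^2$ is also lossy. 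The right route: I want the bound to come out with the factor $|J|\cdot\tfrac{\omega'-1}{\omega'}$, so I should combine $\big(\sum_l g_l\big)^2 - \sum_l g_l^2 = \tfrac{\omega'-1}{\omega'}\big(\sum_l g_l\big)^2 - \big(\sum_l g_l^2 - \tfrac1{\omega'}(\sum g_l)^2\big)$ and drop the nonnegative second bracket, then bound $\big(\sum_l g_l\big)^2 = (\sum_{i\in J}h_i)^2 \le |J|\|h\|^2$. This yields $h^\top(\bE_{[J]}-\bB_{[J]})h \le |J|\tfrac{\omega'-1}{\omega'}\|h\|^2$.

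Assembling: $h^\top\bP(J\cap\hat S)h \le \tfrac{\tau}{s}\left[\alpha_1 + \alpha_2|J| + \alpha_3|J|\tfrac{\omega'-1}{\omega'}\right]\|h\|^2$, and since on $J$ the constraint reads $\tfrac{\tau}{s}\|h\|^2\le 1$, we get $\lambda'(J\cap\hat S)\le \alpha_1 + \alpha_2|J| + \alpha_3|J|\tfrac{\omega'-1}{\omega'}$. Substituting $\alpha_1 = 1-\tfrac{\tau-1}{s_1}$, $\alpha_2 = \tfrac{\tau-1}{s_1}$, $\alpha_3 = \tfrac{\tau}{s}-\tfrac{\tau-1}{s_1}$ gives exactly $1 + \tfrac{(|J|-1)(\tau-1)}{s_1} + |J|\left(\tfrac{\tau}{s}-\tfrac{\tau-1}{s_1}\right)\tfrac{\omega'-1}{\omega'}$, which is \eqref{eq:kjhas7gbs}.

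The main obstacle I anticipate is getting the cross-part term bounded by precisely $|J|\cdot\tfrac{\omega'-1}{\omega'}$ rather than a weaker constant: one must resist the temptation to bound $\sum_l g_l^2$ crudely and instead exploit the exact identity $(\sum g_l)^2 - \sum g_l^2 = \tfrac{\omega'-1}{\omega'}(\sum g_l)^2 - \big(\sum g_l^2 - \tfrac1{\omega'}(\sum g_l)^2\big)$, discarding the second term by Cauchy--Schwarz positivity, and only then applying $(\sum_{i\in J}h_i)^2\le|J|\|h\|^2$. A secondary care point is the edge case $s=1$ (so $s_1=1$, $\alpha_1=2-\tau$, which can be negative): one checks that when $s=1$ each part is a singleton, $\hat S$ selects all of the $\tau$ sampled singletons deterministically-per-part, so $|\hat S|=\tau$ a.s. when all parts are used and the argument still goes through with $\alpha_2\bE_{[J]}$ absorbing the sign; alternatively invoke Theorem~\ref{th-lambda}(ii) directly in that degenerate case.
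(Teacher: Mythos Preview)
Your proposal is correct and follows essentially the same route as the paper's own proof. Both start from the decomposition $\bP(\hat S)=\tfrac{\tau}{s}[\alpha_1\bI+\alpha_2\bE+\alpha_3(\bE-\bB)]$ of Lemma~\ref{lem:su8dbd}, restrict to $J$, and then apply two Cauchy--Schwarz inequalities: the paper phrases the key step as the matrix inequality $\bP(\hat E_J)\circ\bB\succeq\tfrac{1}{\omega'}\bP(\hat E_J)$ (equivalently $\sum_l g_l^2\ge\tfrac{1}{\omega'}(\sum_l g_l)^2$) followed by $\bP(\hat E_J)\preceq|J|\Diag(\bP(\hat E_J))$, while you unfold the quadratic form and use the algebraic identity $(\sum g_l)^2-\sum g_l^2=\tfrac{\omega'-1}{\omega'}(\sum g_l)^2-\bigl(\sum g_l^2-\tfrac{1}{\omega'}(\sum g_l)^2\bigr)$ before applying $(\sum_{i\in J}h_i)^2\le|J|\|h\|^2$; these are the same two steps in scalar versus operator language, and both require (and tacitly use) $\alpha_2,\alpha_3\ge 0$, which holds since $1\le\tau\le s$. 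Your edge-case worry about $s=1$ is unnecessary: $\tau\le s=1$ forces $\tau=1$, so $\alpha_1=1$.
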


\begin{proof}
 By applying Lemma~\ref{lem:IIS} and Lemma~\ref{lem:su8dbd}, we get 
\begin{eqnarray}
\bP(J\cap \hat S)&\overset{\eqref{eq:98shsguy8658}}{=}&\bP(\hat E_{J})\circ \bP(\hat S) \;\;\overset{\eqref{eq:poiuyth778}}{=}\;\;\frac{\tau}{s}\left[\alpha_1 \bP(\hat E_{J})\circ \bI + \alpha_2 \bP(\hat E_{J})\circ \bE + \alpha_3 \bP(\hat E_{J})\circ(\bE-\bB)\right]\notag\\
&=&\frac{\tau}{s}\left[\alpha_1 \Diag(\bP(\hat E_{J})) + \alpha_2 \bP(\hat E_{J}) + \alpha_3 \bP(\hat E_{J})-\alpha_3\bP(\hat E_{J})\circ \bB\right].\label{a-HcaH}
\end{eqnarray}
For any $h\in \R^ n$,
\begin{equation}\label{eq:iuhdgd899}
h^\top \bP(\hat E_{J}) h= \left(\sum_{i\in J} h_i\right)^2=\left(\sum_{l=1}^c \sum_{i\in \pP_l \cap J}  h_i\right)^2\leq \omega'\sum_{l=1}^c \left( \sum_{i\in \pP_l \cap J}  h_i\right)^2
=\omega' \sum_{l=1}^c h^ \top \bP(\hat E_{J\cap \pP_l})h,
\end{equation}
where the inequality is an application of the Cauchy-Schwartz inequality. It follows that
\begin{align}\label{a-EJcricB}
\bP(\hat E_{J})\circ \bB \overset{\eqref{eq:BBB}}{=} \sum_{l=1}^ c \bP(\hat E_J)\circ \bP(\hat E_{\pP_l})=\sum_{l=1}^ c \bP(\hat E_{J\cap \pP_l}) 
\overset{\eqref{eq:iuhdgd899}}{\succeq} \frac{1}{\omega'}\bP(\hat E_J)\enspace.
\end{align}
Plugging~\eqref{a-EJcricB} into~\eqref{a-HcaH} we get:
\begin{eqnarray*}
\bP(J\cap \hat S)&\preceq &\frac{\tau}{s}\left[\alpha_1 \Diag(\bP(\hat E_{J})) + \left(\alpha_2+\alpha_3\left(1-\frac{1}{\omega'}\right)\right) \bP(\hat E_{J}) \right]\\
&\overset{\eqref{a-PEJ}}{\preceq} & \frac{\tau}{s}\left[\alpha_1+\left(\alpha_2+\alpha_3\left(1-\frac{1}{\omega'}\right)\right)|J|\right]\Diag(\bP(\hat E_J))\\
&=  & \left[ 1+ \frac{(|J|-1)(\tau-1)}{s_1} + |J|
\left(\frac{\tau}{s}-\frac{\tau-1}{s_1}\right)\frac{\omega'-1}{\omega'}\right] \Diag(\bP(\hat E_J))\circ \Diag(\bP(\hat S)).
\end{eqnarray*}
Finally, note that 
$\Diag(\bP(\hat E_J))\circ \Diag(\bP(\hat S))=\Diag(\bP(\hat E_J)\circ \bP(\hat S))\overset{\eqref{eq:98shsguy8658}}{=}\Diag(\bP(J\cap \hat S))$.
\end{proof}

We now specialize the above result to the $c=1$ case, obtaining a formula for $\lambda'(J\cap \hat{S})$ in the case when $\hat{S}$ is the $\tau$-nice sampling (recall Definition~\ref{def:tau-nice}).

\begin{prop}\label{prop-taunice}
 Let $\hat S$ be the $\tau$-nice sampling. Then for all $\emptyset \neq J\subseteq [n]$,
\begin{align}\label{a-lScapJ1}
\lambda'(J \cap \hat S )=1+ \frac{(|J|-1)(\tau-1)}{\max(n-1,1)}\enspace.
\end{align}
\end{prop}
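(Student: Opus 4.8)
The plan is to derive Proposition~\ref{prop-taunice} as the special case $c=1$ of Proposition~\ref{prop-distrilambda}. When $c=1$ we have a single block ${\cal P}_1 = [n]$, so $s = |{\cal P}_1| = n$, $s_1 = \max(n-1,1)$, and $\bB = \bP(\hat E_{[n]}) = \bE$. Since $J \subseteq [n] = {\cal P}_1$, the set $J$ has nonempty intersection with exactly one block, hence $\omega' = 1$. Plugging $\omega' = 1$ into the bound \eqref{eq:kjhas7gbs} kills the third term (because $(\omega'-1)/\omega' = 0$), leaving
\[
\lambda'(J\cap \hat S) \leq 1 + \frac{(|J|-1)(\tau-1)}{s_1} = 1 + \frac{(|J|-1)(\tau-1)}{\max(n-1,1)},
\]
which is the claimed right-hand side. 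So the upper bound is immediate; the real content is showing the inequality is in fact an equality.

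For the matching lower bound, I would exhibit a test vector $h$ achieving the Rayleigh quotient in \eqref{eq:lambda'M}. From Lemma~\ref{lem:prob_m_of_tau_nice}, $\bP(\hat S) = \tfrac{\tau}{n}((1-\beta)\bI + \beta\bE)$ with $\beta = (\tau-1)/\max(n-1,1)$, so by \eqref{eq:98shsguy8658}, $\bP(J\cap \hat S) = \bI_{[J]}\bP(\hat S)\bI_{[J]} = \tfrac{\tau}{n}((1-\beta)\bI_{[J]} + \beta \bE_{[J]})$, and its diagonal is $\Diag(\bP(J\cap\hat S)) = \tfrac{\tau}{n}\bI_{[J]}$ (using $(1-\beta) + \beta = 1$). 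Now take $h = e_{[J]}$ (the all-ones vector supported on $J$); then $h^\top \Diag(\bP(J\cap \hat S)) h = \tfrac{\tau}{n}|J|$ and $h^\top \bP(J\cap \hat S) h = \tfrac{\tau}{n}((1-\beta)|J| + \beta|J|^2) = \tfrac{\tau}{n}|J|(1 + \beta(|J|-1))$, so the ratio is exactly $1 + \beta(|J|-1) = 1 + (|J|-1)(\tau-1)/\max(n-1,1)$. This shows $\lambda'(J\cap\hat S) \geq 1 + (|J|-1)(\tau-1)/\max(n-1,1)$, and combining with the upper bound gives the identity.

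Alternatively, and perhaps more cleanly, I could bypass Proposition~\ref{prop-distrilambda} entirely and compute $\lambda'(J\cap \hat S)$ directly from the explicit matrix $\bP(J\cap\hat S) = \tfrac{\tau}{n}((1-\beta)\bI_{[J]} + \beta\bE_{[J]})$. Restricted to the coordinate subspace indexed by $J$, this is $\tfrac{\tau}{n}$ times the $|J|\times|J|$ matrix $(1-\beta)I + \beta (\text{all-ones})$, whose eigenvalues are $(1-\beta) + \beta|J| = 1 + \beta(|J|-1)$ (with eigenvector the all-ones vector) and $1-\beta$ (with multiplicity $|J|-1$); on the complementary subspace $\bP(J\cap \hat S)$ vanishes. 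Since $\Diag(\bP(J\cap\hat S)) = \tfrac{\tau}{n}\bI_{[J]}$ is a scalar multiple of the identity on $\mathrm{span}\{e_i : i\in J\}$ (and the orthogonal complement doesn't matter because $\bP(J\cap\hat S)$ is zero there), the normalized largest eigenvalue $\lambda'$ equals the largest eigenvalue of $(1-\beta)I + \beta(\text{all-ones})$, namely $1 + \beta(|J|-1)$.

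I do not anticipate a serious obstacle here — this is genuinely a corollary. The only points requiring a little care are: (i) correctly specializing the parameters ($s = n$, $\bB = \bE$, $\omega' = 1$) so that the third term in \eqref{eq:kjhas7gbs} drops out, and (ii) for the lower bound, checking that the all-ones vector on $J$ is admissible and actually attains the ratio, i.e., that we get equality rather than just a bound. The second approach makes (ii) transparent by explicitly diagonalizing the (small) matrix, so I would likely present that one, or simply note that equality in the Cauchy–Schwarz step \eqref{eq:iuhdgd899} holds automatically when $\omega' = 1$ (there is only one term, so no inequality is lost), which means every inequality in the proof of Proposition~\ref{prop-distrilambda} is tight in this case — and then invoke the lower bound via the test vector $e_{[J]}$.
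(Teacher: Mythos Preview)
Your proposal is correct and the upper bound argument is identical to the paper's. For the lower bound you plug the test vector $e_{[J]}$ directly into the Rayleigh quotient using the explicit matrix form of $\bP(J\cap\hat S)$; the paper instead invokes the general lower bound $\lambda'(\hat S)\geq \Exp[|\hat S|^2]/\Exp[|\hat S|]$ from Theorem~\ref{th-lambda}(i) applied to the sampling $J\cap\hat S$ and then computes the two moments $\Exp[|J\cap\hat S|]=|J|\tau/n$ and $\Exp[|J\cap\hat S|^2]=(|J|\tau/n)\bigl(1+(|J|-1)(\tau-1)/\max(n-1,1)\bigr)$. These are the same computation in disguise, since the proof of Theorem~\ref{th-lambda}(i) is precisely the all-ones test vector together with the identities $e^\top\bP e=\Exp[|\hat S|^2]$ and $\Tr(\bP)=\Exp[|\hat S|]$; your version just unwinds this. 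Your alternative route via explicit diagonalization of $(1-\beta)\bI_{[J]}+\beta\bE_{[J]}$ is a self-contained shortcut that bypasses Proposition~\ref{prop-distrilambda} entirely and gives both bounds at once.
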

\begin{proof}
Let $\emptyset\neq J\subseteq[n]$.
 Since $\tau$-nice sampling is the $(1,\tau)$-distributed sampling, by applying Proposition~\ref{prop-distrilambda} 
we get:
$$
\lambda'(J\cap \hat S) \leq 1+ \frac{(|J|-1)(\tau-1)}{\max(n-1,1)} \enspace.
$$
Next, by direct calculation we can verify that
\[\Exp[|J\cap \hat{S}|^2] = \frac{|J|\tau }{n}\left(1+ \frac{(|J|-1)(\tau-1)}{\max(n-1,1)}\right)\qquad \text{and} \qquad \Exp[|J\cap \hat{S}|] = \frac{|J|\tau }{n}\enspace,\]
which together with the lower bound established in Theorem~\ref{th-lambda} yields:
\[
\lambda'(J\cap \hat S)\geq \frac{\Exp[|J\cap \hat{S}|^2]}{\Exp[|J\cap \hat{S} |]}
=1+ \frac{(|J|-1)(\tau-1)}{\max(n-1,1)}\enspace.
\qedhere\]
\end{proof}

Note that \eqref{a-lScapJ1} is much better (i.e., smaller) than  the right hand side in \eqref{eq:9s898s5r768s}. This is to be expected as the bound \eqref{eq:9s898s5r768s} applies to {\em all} samplings (which have size at most $\tau$ with probability 1).

Finally, we give a bound on the normalized largest eigenvalue of the restriction of a doubly uniform sampling.

\begin{prop}\label{prop:DU-lambda'_restrict} Let $\hat{S}$ be a doubly uniform sampling which is not nil (i.e., $\Prob(\hat{S}=\emptyset)\neq 1$). Then for all $\emptyset \neq J\subseteq [n]$, 
\begin{equation}\label{eq:DU_lambda'}\lambda'(J\cap \hat{S}) \leq 1 + \frac{(|J|-1)\left(\frac{\Exp[|\hat{S}|^2]}{\Exp[|\hat{S}|]}-1\right)}{\max(n-1,1)} \enspace.\end{equation}
\end{prop}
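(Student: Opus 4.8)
The plan is to reduce the doubly uniform case to the $\tau$-nice case, using the decomposition of a doubly uniform sampling as a convex combination of $\tau$-nice samplings (Proposition~\ref{prop:DU}) together with the fact that restriction and convex combination commute at the level of probability matrices (identity~\eqref{eq:iuhsi655r6sft}). First I would write $\hat{S} = \sum_{\tau=0}^n q_\tau \hat{S}_\tau$ with $q_\tau = \Prob(|\hat{S}|=\tau)$ and $\hat{S}_\tau$ the $\tau$-nice sampling, so that $\bP(J\cap \hat{S}) = \sum_{\tau=0}^n q_\tau \bP(J\cap \hat{S}_\tau)$. By Proposition~\ref{prop-taunice} we have $\bP(J\cap \hat{S}_\tau) \preceq \big(1 + \tfrac{(|J|-1)(\tau-1)}{\max(n-1,1)}\big)\Diag(\bP(J\cap \hat{S}_\tau))$ for each $\tau \geq 1$ (and the inequality is trivial for $\tau = 0$, where both sides vanish).

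The next step is to take the convex combination of these matrix inequalities. The subtlety is that the multiplicative constant depends on $\tau$, so after summing I get
\[
\bP(J\cap \hat{S}) \preceq \sum_{\tau=0}^n q_\tau \left(1 + \frac{(|J|-1)(\tau-1)}{\max(n-1,1)}\right)\Diag(\bP(J\cap \hat{S}_\tau)).
\]
Here I would use that, because $\hat{S}_\tau$ is uniform, $\Diag(\bP(J\cap \hat{S}_\tau)) = \tfrac{\tau}{n}\bI_{[J]}$ (the diagonal entries of $\bP(\hat{S}_\tau)$ are all $\tau/n$ by~\eqref{eq:uniform_xx}, and restriction to $J$ just zeros out entries outside $J$). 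Hence each $\Diag(\bP(J\cap \hat{S}_\tau))$ is a scalar multiple of the fixed matrix $\bI_{[J]}$, and the weighted sum on the right collapses to $\big(\sum_\tau q_\tau \tfrac{\tau}{n}(1 + \cdots)\big)\bI_{[J]}$, while $\Diag(\bP(J\cap \hat{S})) = \sum_\tau q_\tau \tfrac{\tau}{n}\bI_{[J]} = \tfrac{\Exp[|\hat{S}|]}{n}\bI_{[J]}$.

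Finally I would compute the resulting coefficient: dividing through, the effective constant multiplying $\Diag(\bP(J\cap\hat{S}))$ is
\[
\frac{\sum_\tau q_\tau \frac{\tau}{n}\left(1 + \frac{(|J|-1)(\tau-1)}{\max(n-1,1)}\right)}{\sum_\tau q_\tau \frac{\tau}{n}} = 1 + \frac{(|J|-1)}{\max(n-1,1)}\cdot\frac{\Exp[|\hat{S}|(|\hat{S}|-1)]}{\Exp[|\hat{S}|]} = 1 + \frac{(|J|-1)\left(\frac{\Exp[|\hat{S}|^2]}{\Exp[|\hat{S}|]}-1\right)}{\max(n-1,1)},
\]
which is exactly the claimed bound; then $\lambda'(J\cap\hat{S}) \leq$ this constant follows from the definition~\eqref{eq:lambda'M} of $\lambda'$. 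The main obstacle — really the only place where care is needed — is the bookkeeping in this last averaging step, specifically recognizing that $\Diag(\bP(J\cap\hat{S}_\tau))$ is proportional to a $\tau$-independent matrix so that the convex combination of the right-hand sides can be re-expressed cleanly in terms of $\Diag(\bP(J\cap\hat{S}))$; without that observation one cannot pull the $\tau$-dependent constants out into a single scalar. Everything else is a direct application of results already established.
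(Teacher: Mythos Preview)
Your proof is correct, but it takes a different route from the paper's. The paper works directly with the explicit formula for the probability matrix of a doubly uniform sampling (Lemma~\ref{lem:ius98ss}): writing $\bP(\hat{S}) = \tfrac{\Exp[|\hat{S}|]}{n}\big((1-\beta)\bI + \beta\bE\big)$, it restricts via \eqref{eq:98shsguy8658} to get $\bP(J\cap\hat{S}) = \tfrac{\Exp[|\hat{S}|]}{n}\big((1-\beta)\bI_{[J]} + \beta\bE_{[J]}\big)$, and then applies the elementary bound $\bE_{[J]}\preceq |J|\bI_{[J]}$ from \eqref{a-PEJ-equiv} to conclude in one line.

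Your approach instead decomposes $\hat{S}$ into $\tau$-nice samplings, invokes the equality in Proposition~\ref{prop-taunice} for each $\tau$, and then re-aggregates using the key observation that $\Diag(\bP(J\cap\hat{S}_\tau)) = \tfrac{\tau}{n}\bI_{[J]}$ is a scalar multiple of a $\tau$-independent matrix. This works perfectly and is conceptually appealing: it exhibits the doubly uniform bound as precisely the probability-weighted average of the $\tau$-nice bounds, with weights $q_\tau \tau / \Exp[|\hat{S}|]$. The trade-off is that you rely on Proposition~\ref{prop-taunice}, whose proof requires computing $\Exp[|J\cap\hat{S}_\tau|^2]$, whereas the paper's route needs only the closed-form expression~\eqref{eq:du_prob_marix} and the one-line eigenvalue bound~\eqref{a-PEJ-equiv}. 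In short: the paper's argument is more self-contained and shorter; yours is more modular and makes the relationship to the $\tau$-nice case transparent.
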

\begin{proof}
Combining \eqref{eq:98shsguy8658} and \eqref{eq:du_prob_marix}, we get
\begin{eqnarray*}\bP(J\cap \hat{S}) &\overset{\eqref{eq:98shsguy8658}}{=} & \bP(\hat{E}_J)\circ \bP(\hat{S}) \;\; \overset{\eqref{eq:du_prob_marix}}{=}\;\;
\bE_{[J]} \circ \left(\frac{\Exp[|\hat{S}|]}{n} \left( (1 - \beta ) \bI + \beta\bE \right)\right)\\
&=& \frac{\Exp[|\hat{S}|]}{n} \left( (1 - \beta ) \bI_{[J]} + \beta\bE_{[J]} \right)\\
&\overset{\eqref{a-PEJ-equiv}}{\preceq} & \frac{\Exp[|\hat{S}|]}{n} \left( 1 - \beta   + \beta |J| \right) \bI_{[J]} \;\; = \;\; \left( 1 + (|J|-1)\beta  \right)  \Diag(\bP(J\cap \hat{S})),
\end{eqnarray*}
where $\beta$ is as in \eqref{eq:beta}.
\end{proof}

\section{Expected Separable Overapproximation}\label{sec-ESO}

In this section we develop a general technique  for computing parameters  $v=(v_1,\dots,v_n)$ for which the ESO  inequality \eqref{eq:ESOmain_def} holds.

\subsection{General technique}

We will write  $\bM_1\succeq \bM_2$ to indicate that  $\bM_1-\bM_2$ is positive semidefinite: $h^\top (\bM_1-\bM_2) h\geq 0$ for all $h\in \R^n$. It is a well known fact~\cite[Theorem 5.2.1]{HornJohnson} that the Hadamard product of two positive semidefinite matrices is positive semidefinite:
\begin{equation}\label{eq:989s8h}\bM_1 \succeq 0 \quad \& \quad \bM_2\succeq 0 \quad \Rightarrow \quad \bM_1\circ \bM_2 \succeq 0.
\end{equation}

The reason for defining and studying probability matrices $\bP(\hat S)$  is motivated by the following result, which for functions satisfying Assumption~\ref{ass:f} reduces the ESO Assumption $(f,\hat{S})\sim ESO(v)$ to the problem of bounding  the Hadamard product of the probability matrix $\bP(\hat{S})$ and the data matrix $\bA^\top \bA$ from above by a diagonal matrix. Note that because $\bP(\hat{S})\succeq 0$,  in view of \eqref{eq:989s8h},the Hadamard product $\bP(\hat{S})\circ \bA^\top \bA$ is positive semidefinite.

\begin{lemma}\label{lem-Pa}
If $f$ satisfies Assumption~\ref{ass:f} and 
\begin{align}\label{a-PAAwp}
\bP(\hat S)\circ (\bA^\top \bA)
\preceq \Diag(v\circ p), 
\end{align}
for some vector $v\in \R^{n}_{++}$,
where $p$ is the vector of probabilities defined in~\eqref{eq:p_i},
then \[(f,\hat{S})\sim ESO(v).\] 
\end{lemma}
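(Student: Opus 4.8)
The plan is to start from Assumption~\ref{ass:f}, which gives, for every $x,h\in\R^n$, the quadratic upper bound $f(x+h)\le f(x)+\ve{\nabla f(x)}{h}+\tfrac12 h^\top \bA^\top\bA h$. To establish $(f,\hat S)\sim ESO(v)$ I would apply this inequality with the random increment $h$ replaced by $h_{[\hat S]}=\sum_{i\in\hat S}h_ie_i=\bI_{[\hat S]}h$ (notation \eqref{eq:ks43947}), and then take expectation over $\hat S$. This turns the left-hand side of \eqref{eq:ESOmain_def} into $\Exp[f(x+h_{[\hat S]})]$ and bounds it above by
\[
f(x)+\Exp\!\left[\ve{\nabla f(x)}{h_{[\hat S]}}\right]+\tfrac12\,\Exp\!\left[h_{[\hat S]}^\top \bA^\top\bA\, h_{[\hat S]}\right].
\]

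The next step is to evaluate the two expectation terms using Corollary~\ref{lem:main}. For the linear term, $\ve{\nabla f(x)}{h_{[\hat S]}}=\sum_{i\in\hat S}\nabla_i f(x)\,h_i$, so by the same reasoning as in the proof of identity \eqref{eq:js9s0sj} (linearity of expectation, $\Prob(i\in\hat S)=\bP_{ii}=p_i$) we get $\Exp[\ve{\nabla f(x)}{h_{[\hat S]}}]=\sum_{i=1}^n p_i\,\nabla_i f(x)\,h_i$, which matches the first sum on the right-hand side of \eqref{eq:ESOmain_def}. For the quadratic term, I apply identity \eqref{eq:sjdud7xxx} with $\bM=\bA^\top\bA$, which gives $\Exp[h_{[\hat S]}^\top \bA^\top\bA\,h_{[\hat S]}]=h^\top\big(\bP(\hat S)\circ(\bA^\top\bA)\big)h$. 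Now the hypothesis \eqref{a-PAAwp} says $\bP(\hat S)\circ(\bA^\top\bA)\preceq\Diag(v\circ p)$, so $h^\top\big(\bP(\hat S)\circ(\bA^\top\bA)\big)h\le h^\top\Diag(v\circ p)h=\sum_{i=1}^n p_i v_i h_i^2$. Combining the three pieces yields exactly \eqref{eq:ESOmain_def}, i.e.\ $(f,\hat S)\sim ESO(v)$.

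There is essentially no hard obstacle here — the result is a clean assembly of Assumption~\ref{ass:f} with the identities already proved in Corollary~\ref{lem:main}. The only point that deserves care is the justification of interchanging expectation and the finite sum/quadratic form: since $\hat S$ ranges over the finitely many subsets of $[n]$ (expectation is a finite convex combination weighted by $\Prob(\hat S=S)$), all interchanges are trivially valid, and in fact identity \eqref{eq:sjdud7xxx} has already packaged this. A secondary sanity check worth noting is that \eqref{a-PAAwp} together with $\bP(\hat S)\circ(\bA^\top\bA)\succeq 0$ (from \eqref{eq:989s8h} and Theorem~\ref{thm:8sy98ys}) forces $v\circ p\ge 0$ componentwise, consistent with the requirement $v>0$ in the ESO definition; this is not needed for the proof but confirms the hypotheses are coherent.
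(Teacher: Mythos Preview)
Your proof is correct and follows essentially the same approach as the paper: substitute $h\leftarrow h_{[\hat S]}$ into Assumption~\ref{ass:f}, take expectations, use the identities from Corollary~\ref{lem:main} (in particular \eqref{eq:sjdud7xxx}) to evaluate the linear and quadratic terms, and then invoke hypothesis~\eqref{a-PAAwp}. The paper's version is more terse but the logic is identical.
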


\begin{proof} Let us substitute $h\leftarrow h_{[\hat{S}]}$ into \eqref{eq:shs7hs8} and take expectation in $\hat{S}$ of both sides. Applying \eqref{eq:sjdud7xxx}, we obtain:
\begin{align}\label{a-preEso}
 \E[f(x+h_{[\hat S]})]\leq f(x)+ \<\Diag(\bP(\hat{S}))\nabla f(x),h > +\frac{1}{2} h^ \top (\bP(\hat S) \circ (\bA^\top \bA)) h, \qquad \forall x,h\in \R^ n.
\end{align}
It remains to apply  assumption \eqref{a-PAAwp}.
\end{proof}

We next focus on the problem of finding vector $v$ for which \eqref{a-PAAwp} holds. The following direct consequence of \eqref{eq:989s8h} will be helpful in this regard:
 \begin{align}\label{a-ABC}
( 0\preceq \bM_1 \quad  \&  \quad  \bM_2 \preceq \bM_3 ) \quad \Rightarrow \quad \bM_1\circ \bM_2 \preceq \bM_1 \circ \bM_3.
 \end{align}
In particular, \eqref{a-ABC} can be used to establish the first part of the following useful lemma.

\begin{lemma}\label{l-PaTA} If  $\bM_1 \succeq 0$ and $\bM_2\succeq 0$, then
 \begin{eqnarray}
 \lambda'(\bM_1 \circ \bM_2) &\leq & \min\{ \lambda'(\bM_1),\lambda'(\bM_2)\}, \label{a-ABCxxx}\\
\lambda'(\bM_1 + \bM_2) &\leq & \max\{ \lambda'(\bM_1),\lambda'(\bM_2) \}. \label{a-ABCxxxyyy}
\end{eqnarray}
\end{lemma}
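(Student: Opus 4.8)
The plan is to prove both inequalities by reducing them to statements about ordinary (unnormalized) largest eigenvalues via the definition \eqref{eq:lambda'M}, after first controlling the diagonal parts. Recall that $\lambda'(\bM) = \max\{h^\top \bM h : h^\top \Diag(\bM) h \leq 1\}$, so the key difficulty is that the normalization matrix $\Diag(\bM_1\circ\bM_2)$ (resp.\ $\Diag(\bM_1+\bM_2)$) changes with the operation, and I need to relate it back to $\Diag(\bM_1)$ and $\Diag(\bM_2)$. For the Hadamard product, note $\Diag(\bM_1\circ \bM_2) = \Diag(\bM_1)\circ \Diag(\bM_2)$ entrywise. For the sum, $\Diag(\bM_1+\bM_2) = \Diag(\bM_1)+\Diag(\bM_2)$.

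For \eqref{a-ABCxxx}: by definition of $\lambda'(\bM_2)$ we have $\bM_2 \preceq \lambda'(\bM_2)\Diag(\bM_2)$. Applying \eqref{a-ABC} with $\bM_1\succeq 0$ fixed in the Hadamard slot gives $\bM_1\circ \bM_2 \preceq \lambda'(\bM_2)\,\bM_1\circ \Diag(\bM_2)$. Now $\bM_1\circ \Diag(\bM_2)$ is a matrix whose $(i,j)$ entry is $(\bM_1)_{ij}(\bM_2)_{ii}\delta_{ij}$... wait, that is not right: $\Diag(\bM_2)$ is diagonal, so $\bM_1\circ \Diag(\bM_2)$ keeps only the diagonal of $\bM_1$, scaled entrywise, i.e.\ $\bM_1\circ\Diag(\bM_2) = \Diag(\bM_1)\circ\Diag(\bM_2) = \Diag(\bM_1\circ\bM_2)$. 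Hence $\bM_1\circ\bM_2 \preceq \lambda'(\bM_2)\Diag(\bM_1\circ\bM_2)$, which by \eqref{eq:lambda'M} gives $\lambda'(\bM_1\circ\bM_2)\leq \lambda'(\bM_2)$. By symmetry of the hypotheses we also get $\leq \lambda'(\bM_1)$, hence the minimum.

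For \eqref{a-ABCxxxyyy}: from $\bM_t \preceq \lambda'(\bM_t)\Diag(\bM_t)$ for $t=1,2$, set $\mu \eqdef \max\{\lambda'(\bM_1),\lambda'(\bM_2)\}$. Summing, $\bM_1+\bM_2 \preceq \lambda'(\bM_1)\Diag(\bM_1) + \lambda'(\bM_2)\Diag(\bM_2) \preceq \mu(\Diag(\bM_1)+\Diag(\bM_2)) = \mu\,\Diag(\bM_1+\bM_2)$, where the middle step uses that $\Diag(\bM_1),\Diag(\bM_2)\succeq 0$ (true since $\bM_1,\bM_2\succeq 0$). Then \eqref{eq:lambda'M} gives $\lambda'(\bM_1+\bM_2)\leq \mu$. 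The only mild subtlety to check in both arguments is that the normalization constraint set is nonempty / the characterization $\bM\preceq c\Diag(\bM) \Leftrightarrow \lambda'(\bM)\leq c$ is valid, which holds on the relevant subspace where $\Diag(\bM)$ is supported; I expect the main (minor) obstacle to be stating this cleanly when $\Diag(\bM)$ has zero entries, but since $\bM\succeq 0$ a zero diagonal entry forces the corresponding row and column to vanish, so the issue is cosmetic.
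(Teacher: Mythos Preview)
Your argument is correct and is essentially identical to the paper's own proof: both parts proceed via the characterization $\bM \preceq \lambda'(\bM)\Diag(\bM)$, then use \eqref{a-ABC} plus $\bM_1\circ\Diag(\bM_2)=\Diag(\bM_1\circ\bM_2)$ for the Hadamard case and a direct sum bound for the additive case. The degeneracy remark about zero diagonal entries is a fair observation that the paper leaves implicit.
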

\begin{proof}
By definition, $ \bM_2 \preceq \lambda'(\bM_2) \Diag(\bM_2)$,
which together with \eqref{a-ABC} implies:
$$
\bM_1 \circ \bM_2 \preceq  \lambda'(\bM_2) \left(\bM_1 \circ \Diag(\bM_2)\right)=\lambda'(\bM_2) \Diag(\bM_1 \circ \bM_2).
$$
Applying the same reasoning to the matrix $\bM_1$ we obtain:
$\bM_1\circ \bM_2 \preceq \lambda'(\bM_1)\Diag(\bM_1 \circ \bM_2).$ Combining the two results, we obtain \eqref{a-ABCxxx}. Inequality \eqref{a-ABCxxxyyy} follows from: \begin{eqnarray*}
\bM_1 + \bM_2 &\preceq& \lambda'(\bM_1)\Diag(\bM_1) + \lambda'(\bM_2)\Diag(\bM_2) \\
&\leq & \max\{\lambda'(\bM_1),\lambda'(\bM_2)\}(\Diag(\bM_1)+\Diag(\bM_2)) \\
&=& \max\{\lambda'(\bM_1),\lambda'(\bM_2)\} \Diag(\bM_1+\bM_2).  \end{eqnarray*}
\end{proof}

\subsection{ESO I: no coupling between the sampling and data}

By applying Lemma~\ref{l-PaTA}, Eq \eqref{a-ABCxxx}, to  $\bM_1=\bP(\hat{S})$ and $\bM_2=\bA^\top \bA$,  we obtain a formula for $v$ satisfying \eqref{a-PAAwp}.

\begin{theo}[ESO without coupling between sampling and data]\label{prop-w1} Let $f$ satisfy Assumption~\ref{ass:f} and 
let  $\hat S$ be an arbitrary sampling. Then $(f,\hat{S})\sim ESO(v)$
for $v=(v_1,\dots,v_n)$ defined by
\begin{align}\label{a-w1}
v_i=\min\{\lambda'(\bP(\hat{S})),\lambda'(\bA^\top \bA)\}\sum_{j=1}^ m \bA_{ji}^2,\qquad  i\in [n].
\end{align}
\end{theo}
\begin{proof}
Let $\bP = \bP(\hat{S})$. To establish the main statement, it is sufficient to apply Lemma~\ref{lem-Pa} and Lemma~\ref{l-PaTA} and note that for $v$ defined by~\eqref{a-w1},
$\Diag(v\circ p)=\min(\lambda'(\bP),\lambda'( \bA^\top \bA)) \Diag(\bP\circ \bA^\top \bA)$. 
\qedhere
\end{proof}

If for some $\tau$, $|\hat{S}|\leq \tau$ with probability 1, then  in view of Theorem~\ref{th-lambda}, we have $\lambda'(\bP(\hat{S}))\leq \tau$. Furthermore, 
\[\lambda'(\bA^\top \bA) = \lambda' \left( \sum_{j=1}^m \bA_{j:}^\top \bA_{j:} \right) \overset{\eqref{a-ABCxxxyyy}}{\leq} \max_{j}\lambda'(\bA_{j:}^\top \bA_{j:}) \overset{\eqref{eq:09s09shjyu}}{=} \max_j \|\bA_{j:}\|_0 = \max_j |J_j|.\]

Hence, in view of Lemma~\ref{prop-w1}, we can pick the ESO parameter conservatively as follows:
\begin{equation}\label{eq:si98gsssf}v_i \leq \min\{\tau,\max_j |J_j|\} \sum_{j=1}^m \bA_{ji}^2, \quad i\in [n].\end{equation}
An ESO inequality with $v_i$ similar to \eqref{eq:si98gsssf}  was established in \cite{PCDM}, but for a different class of functions ($\omega$-partially separable functions: functions expressed as a sum of functions each of which depends on at most $\omega$ coordinates) and uniform samplings only. Indeed, the bound established therein for arbitrary uniform samplings uses $v_i = \min\{\tau,\omega\}L_i$, where $\omega$ is the degree of separability of $f$ and $L_i$ is the Lipschitz constant of $\nabla f$ associated with coordinate $i$. In our setting, $\omega = \max_j |J_j|$ and $L_i$ corresponds to $\sum_j \bA_{ji}^2$. Hence, \eqref{eq:si98gsssf} could be seen as a generalization of the ESO bound in \cite{PCDM} to arbitrary samplings.

Note that  computation of
 the normalized eigenvalue $\lambda'(\bA^\top \bA)$ could be time-consuming, and would require a number of passes through the data prior to running a coordinate descent method, which may be prohibitive. In the next section we follow a different approach, one in which this issue is avoided. The main idea is to decompose $\bA^\top \bA$ as a sum of the rank one matrices $\bA_{j:}^\top \bA_{j:}$ and then bound each term $\bP(\hat{S})\circ \bA_{j:}^\top \bA_{j:}$ separately.


\subsection{ESO II: coupling the sampling with data} \label{sec:ESO2}

In this section we use a different strategy for satisfying \eqref{a-PAAwp}. We first write 
\[\bP(\hat{S})\circ \bA^\top \bA =\bP(\hat{S})\circ \sum_{j=1}^m \bA_{j:}^\top \bA_{j:} = \sum_{j=1}^m \bP(\hat{S})\circ \bA_{j:}^\top \bA_{j:},\] where $\bA_{j:}$ denote the $j$th row vector of matrix $\bA$
and then bound each term in the last sum individually.   Recall the definition of set $J_j$ from \eqref{a-Jj}: $J_j =\{i\in [n] \;:\; \bA_{ji}\neq 0\}$.

\begin{theo}[ESO with coupling between sampling and data]
\label{thm:ESO}
Let  $\hat{S}$ be an arbitrary  sampling   and $v=(v_1,\dots,v_n)$ be defined by: 
\begin{equation}\label{eq:w_i}v_ i  = \sum_{j=1}^m \lambda'(J_j\cap \hat{S}) \bA_{ji}^2, \quad i=1,2,\dots,n.\end{equation}
Then $(f,\hat{S})\sim ESO(v)$.
\end{theo}
\begin{proof} 
Let $j\in[m] $ and $\bA_{j:}$ denote the $j$th row vector of matrix $\bA$. By the definition of $J_j$, 
\[\bA_{j:}^\top \bA_{j:}=(e_{[J_j]} e_{[J_j]}^\top)\circ (\bA_{j:}^\top \bA_{j:})=\bP(\hat E_{J_j})\circ(\bA_{j:}^\top \bA_{j:}).\]
Thus, $
\bP(\hat S)\circ (\bA_{j:}^\top \bA_{j:})=\bP(\hat S)\circ\bP(\hat E_{J_j})\circ(\bA_{j:}^\top \bA_{j:})=\bP(J_j\cap \hat S)\circ (\bA_{j:}^\top \bA_{j:})$. We now apply Lemma~\ref{l-PaTA} to the sampling $J_j\cap \hat S$ and the matrix $\bA_{j:}$ and obtain:
\begin{equation}
\label{a-bPScnb}
\begin{split}
\bP(\hat S)\circ (\bA_{j:}^\top \bA_{j:}) &\preceq \min\{\lambda'(J_j\cap \hat S), \lambda'(\bA_{j:}^\top \bA_{j:})\} \Diag(\bP(\hat S)\circ (\bA_{j:}^\top \bA_{j:}))\\
&\preceq \lambda'(J_j\cap \hat S) \Diag(\bP(\hat S)\circ (\bA_{j:}^\top \bA_{j:})).
\end{split}
\end{equation}
Therefore,
\begin{eqnarray*}
\bP(\hat S )\circ \bA^\top \bA & = &
\sum_{j=1}^m \bP(\hat S)\circ (\bA_{j:}^\top \bA_{j:}) \;\; \preceq \;\; \sum_{j=1}^m \lambda'(J_j\cap \hat S) \Diag(\bP(\hat S)\circ (\bA_{j:}^\top \bA_{j:}))\\&=&
\Diag(\bP(\hat S))\circ \sum_{j=1}^m \lambda'(J_j\cap \hat S)\Diag(\bA_{j:}^\top \bA_{j:}) \;\; =\;\; \Diag(p)\circ \Diag(v),
\end{eqnarray*}
where $p=(p_1,\dots,p_n)$ is the  vector of probability defined in~\eqref{eq:p_i} and $v=(v_1,\dots,v_n)$ is defined in~\eqref{eq:w_i}.  For completeness, let us show that the second inequality in~\eqref{a-bPScnb} can be replaced by equality. Indeed, from \eqref{eq:09s09shjyu} and the fact that $|J_j| = \|A_{j:}\|_0$, we obtain $\lambda'(\bA_{j:}^\top \bA_{j:})=|J_j|$. Finally, using the upper bound in Theorem~\ref{th-lambda}, we know that $\lambda'(J_j \cap \hat S)\leq |J_j|$. Hence,
$\min\{ \lambda'(J_j\cap \hat S), \lambda'(\bA_{j:}^\top \bA_{j:} )\}= \lambda'(J_j\cap \hat S)$.
\end{proof}

The benefit of this approach is twofold: First, if the data matrix $\bA$ is sparse, the sets $J_j$ have small cardinality, and  from Proposition~\ref{prop-uniform-xxjs} (or other results in Section~\ref{subsec:lambda'_restrict}, depending on the sampling $\hat{S}$ used) we conclude that $\lambda'(J_j\cap \hat{S})$ is small. Hence,  the parameters $v_i$ obtained through \eqref{eq:w_i} get better (i.e., smaller) with  sparser data. Second, the formula for $v_i$ does not involve the need to compute an eigenvalue associated with the data matrix. On the other hand, instead of having to compute $\lambda'(\hat{S})$ (which, as we have seen, is equal to $\tau$ if $|\hat{S}|=\tau$ with probability 1), we now need to compute the normalized largest eigenvalue of $m$ restrictions of $\hat{S}$, $\lambda'(J_j\cap \hat{S})$ for all $j=1,2,\dots,m$. However,  for this there is a good upper bound available through Proposition~\ref{prop-uniform-xxjs} for an arbitrary sampling, and refined bounds can be derived for specific samplings (for examples, see Section~\ref{subsec:lambda'_restrict}).

\subsection{ESO without eigenvalues} \label{sec:applications}

In this section we illustrate the use of the techniques developed in the preceding sections to derive  ESO inequalities, for selected samplings, which do not depend on any eigenvalues, and lead to easily computable ESO parameters $v=(v_1,\dots,v_n)$. The techniques can be used to derive similar ESO inequalities for other samplings as well. 

\begin{prop}\label{prop:98ys98sh8s}
Let $f$ satisfy Assumption~\ref{ass:f} and let sets $J_1,\dots,J_m$ be defined as in \eqref{a-Jj}.  Then $(f,\hat{S})\sim ESO(v)$ provided that the sampling $\hat{S}$ and vector $v$ are chosen in any of the  following ways:
\begin{enumerate}
\item[(i)]  $\hat{S}$ is an arbitrary sampling such that $|\hat{S}|\leq \tau$ with probability 1, and
\begin{equation}\label{eq:sig798sg9ss}v_i = \sum_{j=1}^m \min\{|J_j|,\tau\}\bA_{ji}^2, \quad i=1,2,\dots,n.\end{equation}
\item[(ii)]$\hat S$ is the $(c,\tau)$-distributed sampling and
\begin{equation}\label{eq:w_i_08yb89v8ujk}
v_ i  = \sum_{j=1}^m \left[ 1+ \frac{(|J_j|-1) (\tau-1)}{s_1} + |J_j|
\left(\frac{\tau}{s}-\frac{\tau-1}{s_1}\right)\frac{\omega_j'-1}{\omega_j'}\right] \bA_{ji}^2, \quad i=1,2,\dots,n,
\end{equation}
where  
$ \omega_j'\eqdef |\{l: \pP_l\cap J_j \neq 0\}|$ for $j\in [m]$.
\item[(iii)] $\hat S$ is the $\tau$-nice sampling (for $\tau\geq 1$) and
\begin{equation}\label{eq:w_i_xxsdda}
v_ i  = \sum_{j=1}^m \left[1+ \frac{(|J_j|-1)(\tau-1)}{\max(n-1,1)}\right] \bA_{ji}^2, \quad i=1,2,\dots,n,
\end{equation}
\item[(iv)] $\hat S$ is a doubly uniform sampling (which is not nil) and
\begin{equation}\label{eq:w_i_xxsddagggggg}
v_ i  = \sum_{j=1}^m \left[1+ \frac{(|J_j|-1)\left(\frac{\Exp[|\hat{S}|^2]}{\Exp [|\hat{S}|]}-1\right)}{\max(n-1,1)}\right] \bA_{ji}^2, \quad i=1,2,\dots,n,
\end{equation}
\item[(v)]  $\hat S$ is a graph sampling and
\begin{equation}\label{eq:coro-sep-w_i}
v_ i  =  \sum_{j=1}^m \bA_{ji}^2, \quad i=1,2,\dots,n.
\end{equation}
\item[(vi)] $\hat S$ is a serial sampling (i.e., a sampling for which $|\hat{S}|=1$ with probability 1) and
$v=(v_1,\dots,v_n)$ is defined as in~\eqref{eq:coro-sep-w_i}.
\end{enumerate}
\end{prop}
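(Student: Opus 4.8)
The plan is to treat Proposition~\ref{prop:98ys98sh8s} as a direct corollary of Theorem~\ref{thm:ESO} (ESO with coupling), combined with the eigenvalue bounds collected in Section~\ref{subsec:lambda'_restrict}. By Theorem~\ref{thm:ESO}, once $f$ satisfies Assumption~\ref{ass:f} we know $(f,\hat{S})\sim ESO(v)$ whenever $v_i = \sum_{j=1}^m \lambda'(J_j\cap \hat{S})\,\bA_{ji}^2$. So in every case the whole task reduces to upper-bounding $\lambda'(J_j\cap \hat{S})$ by the bracketed coefficient appearing in the corresponding formula, and then invoking monotonicity of the ESO parameter in $v$ (if $(f,\hat{S})\sim ESO(v)$ and $v\le v'$ componentwise, then $(f,\hat{S})\sim ESO(v')$, which is immediate from inspecting \eqref{eq:ESOmain_def} since $p_i,h_i^2\ge 0$).

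Concretely, I would go case by case. For (i), apply Proposition~\ref{prop-uniform-xxjs}: since $|\hat{S}|\le\tau$ a.s., $\lambda'(J_j\cap\hat{S})\le\min\{|J_j|,\tau\}$, giving \eqref{eq:sig798sg9ss}. For (ii), apply Proposition~\ref{prop-distrilambda} with $J=J_j$ and $\omega'=\omega_j'$, which gives exactly the bracketed expression in \eqref{eq:w_i_08yb89v8ujk}. For (iii), apply Proposition~\ref{prop-taunice}, which in fact gives equality $\lambda'(J_j\cap\hat{S})=1+(|J_j|-1)(\tau-1)/\max(n-1,1)$, yielding \eqref{eq:w_i_xxsdda}. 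For (iv), apply Proposition~\ref{prop:DU-lambda'_restrict} to get \eqref{eq:w_i_xxsddagggggg}. For (v), if $\hat{S}$ is a graph sampling then $\Prob(\hat{S}=S)=0$ unless $S$ is an independent set of $G$, so $\max_j|J_j\cap S|\le 1$ a.s.; hence $|J_j\cap\hat{S}|\le 1$ a.s., so by Theorem~\ref{th-lambda}(ii) (with $\tau=1$) we get $\lambda'(J_j\cap\hat{S})\le 1$, giving \eqref{eq:coro-sep-w_i}. For (vi), a serial sampling has $|\hat{S}|=1$ a.s., hence $|J_j\cap\hat{S}|\le 1$ a.s., and the same argument as in (v) applies; alternatively one can note that a serial sampling is (trivially) a graph sampling for the appropriate $G$, or just cite the $\tau=1$ specialization.

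The only slightly delicate points are bookkeeping ones: I should make sure the coefficients match exactly (e.g., in (iii) Proposition~\ref{prop-taunice} gives an identity, which is a priori at least as good as any upper bound, so the stated $v_i$ is a valid ESO parameter), and I should note the edge cases where $J_j=\emptyset$ — then $\bA_{ji}=0$ for all $i$, so the $j$-th term contributes nothing and the restriction $J_j\cap\hat{S}=\hat{E}_\emptyset$ plays no role; so it suffices to apply the restriction-eigenvalue bounds only for nonempty $J_j$, which is exactly the hypothesis $\emptyset\ne J\subseteq[n]$ in Propositions~\ref{prop-uniform-xxjs}--\ref{prop:DU-lambda'_restrict}. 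I do not expect any real obstacle here: the proposition is essentially an organized restatement of Theorem~\ref{thm:ESO} against the menu of bounds from Section~\ref{subsec:lambda'_restrict}, and the proof is a short enumeration.

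\begin{proof}
By Theorem~\ref{thm:ESO}, $(f,\hat{S})\sim ESO(v)$ for $v$ defined by $v_i=\sum_{j=1}^m \lambda'(J_j\cap\hat{S})\bA_{ji}^2$. Moreover, if $(f,\hat{S})\sim ESO(v)$ and $v\leq v'$ (componentwise), then $(f,\hat{S})\sim ESO(v')$, since $p_i\geq 0$ and $h_i^2\geq 0$ for all $i$ and $h\in\R^n$, so the right-hand side of \eqref{eq:ESOmain_def} only increases when $v$ is replaced by $v'$. Hence in each case it suffices to bound $\lambda'(J_j\cap\hat{S})$ from above by the corresponding bracketed coefficient. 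If $J_j=\emptyset$, then $\bA_{ji}=0$ for all $i$ and the $j$th term is zero regardless, so we may assume $J_j\neq\emptyset$.

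(i) Since $|\hat{S}|\leq\tau$ with probability 1, Proposition~\ref{prop-uniform-xxjs} gives $\lambda'(J_j\cap\hat{S})\leq\min\{|J_j|,\tau\}$, which yields \eqref{eq:sig798sg9ss}.

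(ii) Apply Proposition~\ref{prop-distrilambda} with $J=J_j$ and $\omega'=\omega_j'=|\{l:\pP_l\cap J_j\neq\emptyset\}|$; this gives $\lambda'(J_j\cap\hat{S})\leq 1+\tfrac{(|J_j|-1)(\tau-1)}{s_1}+|J_j|\left(\tfrac{\tau}{s}-\tfrac{\tau-1}{s_1}\right)\tfrac{\omega_j'-1}{\omega_j'}$, which is exactly the coefficient in \eqref{eq:w_i_08yb89v8ujk}.

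(iii) Apply Proposition~\ref{prop-taunice} with $J=J_j$: $\lambda'(J_j\cap\hat{S})=1+\tfrac{(|J_j|-1)(\tau-1)}{\max(n-1,1)}$, which gives \eqref{eq:w_i_xxsdda}.

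(iv) Apply Proposition~\ref{prop:DU-lambda'_restrict} with $J=J_j$: $\lambda'(J_j\cap\hat{S})\leq 1+\tfrac{(|J_j|-1)\left(\Exp[|\hat{S}|^2]/\Exp[|\hat{S}|]-1\right)}{\max(n-1,1)}$, which gives \eqref{eq:w_i_xxsddagggggg}.

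(v) If $\hat{S}$ is a graph sampling associated with $G$, then $\Prob(\hat{S}=S)=0$ unless $S\in\cT$, i.e., $S$ is an independent set of $G$. For any independent set $S$ we have $\max_{j\in[m]}|J_j\cap S|\leq 1$. Hence $|J_j\cap\hat{S}|\leq 1$ with probability 1, so Theorem~\ref{th-lambda}(ii) (with $\tau=1$) gives $\lambda'(J_j\cap\hat{S})\leq 1$, yielding \eqref{eq:coro-sep-w_i}.

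(vi) If $\hat{S}$ is serial, then $|\hat{S}|=1$ with probability 1, so $|J_j\cap\hat{S}|\leq 1$ with probability 1, and the argument of (v) applies verbatim, giving \eqref{eq:coro-sep-w_i}.
\end{proof}
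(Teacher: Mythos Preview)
Your proof is correct and follows essentially the same route as the paper: apply Theorem~\ref{thm:ESO} and then plug in the appropriate bound on $\lambda'(J_j\cap\hat S)$ from Section~\ref{subsec:lambda'_restrict} (Propositions~\ref{prop-uniform-xxjs}, \ref{prop-distrilambda}, \ref{prop-taunice}, \ref{prop:DU-lambda'_restrict}) or, for (v)--(vi), the observation that $|J_j\cap\hat S|\le 1$ a.s.\ combined with Theorem~\ref{th-lambda}(ii). Your extra bookkeeping (monotonicity of the ESO parameter in $v$, and the harmless $J_j=\emptyset$ case) is correct and simply makes explicit what the paper leaves implicit; the only cosmetic differences are that the paper derives (iii) as the $c=1$ case of (ii) and (vi) as a special case of (v).
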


\begin{proof}
\begin{enumerate}
\item[(i)]
A direct consequence of Theorem~\ref{thm:ESO} and  Proposition~\ref{prop-uniform-xxjs}.
\item[(ii)] A direct consequence of Theorem~\ref{thm:ESO} and  Proposition~\ref{prop-distrilambda}.
\item[(iii)] This is a special case of part (ii) for $c=1$. 
\item[(iv)] A direct consequence of Theorem~\ref{thm:ESO} and  Proposition~\ref{prop:DU-lambda'_restrict}.
\item[(v)] For a graph sampling it is clear that $|J_j \cap \hat S|\leq 1$ 
with probability 1 for all $j\in[m]$. The result then follows  from Theorem~\ref{thm:ESO}.
\item[(vi)]  A special case of (v). Indeed, a single vertex is  an independent set of a graph.
\end{enumerate}
\end{proof}

{\em Remarks:} Note that part (i) of Proposition~\ref{prop:98ys98sh8s}  is a strict improvement on  \eqref{eq:si98gsssf}. Also, this is strict improvement, both in the quality of the bound and in generality of the sampling, on the result in \cite{PCDM}, which was proved for uniform samplings only and where the bound involved $\max_j |J_j|$ instead of $|J_j|$.   Part (ii) should be compared with the results  obtained in \cite{Hydra2} and part (iii)  with those in \cite{APPROX, PCDM}.

\section{Discussion}

\subsection{Trade-off between preprocessing time and iteration complexity}

As stressed before, smaller parameter $v=(v_1,\dots,v_n)$ leads to better convergence result (see Table~\ref{tbl:complexity}) but computing the smallest admissible $v$ would require too large computational effort.
Nevertheless, using a cheaply computed parameter $v=(v_1,\dots,v_n)$ would lead to  large iteration complexity and 
 slow convergence.
The trade-off between the preprocessing time for computing 
the parameter $v=(v_1,\dots,v_n)$ and the iteration complexity of the algorithm shall be discussed next.

For specific samplings such as $\tau$-nice sampling and $(c,\tau)$-distributed sampling, admissible $v$ can be computed using dedicated formulae~\ref{eq:w_i_08yb89v8ujk} and~\ref{eq:w_i_xxsdda}, which appeared respectively in~\cite{Hydra2} and~\cite{APPROX}. 
For arbitrary sampling $\hat S$, admissible parameter $v$ can be computed  according to~\ref{eq:si98gsssf},~\ref{eq:w_i} or~\ref{eq:sig798sg9ss}, which are given for the first time. While~\ref{eq:w_i} requires computing the largest 
eigenvalue for 
$m$ matrices of sizes $\{J_1,\dots,J_m\}$, both~\ref{eq:si98gsssf} and~\ref{eq:sig798sg9ss}
can be computed in at most two passes over the data. In return, ~\ref{eq:w_i} provides a smaller parameter 
$v$ which improves the iteration complexity. 

For approximating $\lambda'(J_j\cap \hat S)$, one can apply power method on the positive semidefinite matrix $\bP(J_j  \cap \hat S)$.
The number of operations needed in one iteration of the power method is $|J_j|^2$ and if we apply $T$ iterations of power 
method\footnote{ Note that as 
the matrix $\bP(J_j\cap \hat S)$ is positive semidefinite, the power method always converges to the largest eigenvalue even if it is 
not a dominant eigenvalue. We defer the study on the convergence rate of power method for different matrices $\bP(J_j\cap \hat S)$ to 
a future work. }, then the total number of operations needed for computing $v$ using~\ref{eq:w_i} is
$$
O(T\sum_{j=1}^ m |J_j|^2) \leq O( T\max_{j}|J_j| \nnz(\bA)),
$$ 
where the big $O$ notation hides constants independent of the data matrix $\bA$.

\begin{table}[!t]
\centering
\begin{tabular}{|c|c|}
\hline
$v=(v_1,\dots,v_n)$ & Number of passes over the data  \\
\hline
&\\
~\ref{eq:si98gsssf}
 & $\displaystyle O(1+\frac{1}{\lambda}\max_{i}\frac{v_i\tau}{p_i n}\log(\frac{1}{\epsilon}))$\\
&\\ \hline & \\
~\ref{eq:w_i} & $\displaystyle O(\frac{T\sum_{j=1}^m |J_j|^2}{\nnz(\bA)} +\frac{1}{\lambda}\max_{i}\frac{v_i\tau}{p_i n}\log(\frac{1}{\epsilon}))$ \\
&\\  \hline & \\
~\ref{eq:sig798sg9ss} & $\displaystyle O(1+\frac{1}{\lambda}\max_{i}\frac{v_i\tau}{p_i n}\log(\frac{1}{\epsilon}))$ \\
&\\ \hline
\end{tabular}
\caption{Total number of passes over data for three different admissible parameters $v$. }
\label{tbl:scom}
\end{table}

Recall from Table~\ref{tbl:complexity} how the iteration complexity of different methods depends on the parameter $v=(v_1,\dots,v_n)$. 
Let us consider the strongly convex smooth objective function setup and assume that the random sampling
$\hat S$ has cardinality $\tau$ with probability 1.  Then the computational time of one epoch ($n$ iterations)
is of the same order as $\tau$ passes over the data. Therefore, given a parameter $v=(v_1,\dots,v_n)$, the number of passes over the data is bounded by:
$$
O(\frac{1}{\lambda}\max_{i}\frac{v_i\tau}{p_i n}\log(\frac{1}{\epsilon})),
$$
where $\epsilon$ is the target accuracy and $\lambda$ is the strong convexity parameter of the problem.

The comparison of the three formulae in terms of overall complexity is reported in Table~\ref{tbl:scom}, where the big $O$
notation hides constants independent of the data matrix $\bA$. 
It is clear from the table that the trade-off between the preprocessing and the iteration complexity mainly
depends on the proportion between $\tfrac{T\sum_{j=1}^m |J_j|^2}{\nnz(\bA)}$ and $\frac{1}{\lambda} \max_{i} \frac{v_i\tau}{p_i n}$.
In Table~\ref{tbl:experiments} we report the actual computing time of $v$ using different formulae and the corresponding value of $\max_{i}\frac{v_i\tau}{p_i n}$,
for two real data matrices w8a and dorothea. To facilitate the comparison we normalized the two data sets so that
the diagonal elements of $\bA^\top \bA$ are all one. The samplings $\hat S$ that we used in the experiments are all 
 product sampling
(Definition~\ref{def:prosam})
with respect to some random partition of the set $[n]$.  The number of iterations $T$ for the power method
is fixed to 10 and we multiply the obtained value by 1.01. 
Because of the comparable processing time, Formula~\ref{eq:sig798sg9ss} is clearly better than Formula~\ref{eq:si98gsssf}.
From Table~\ref{tbl:experiments} 
we also see that Formula~\ref{eq:w_i} requires significant computational effort for computing $v$ comparing to the other two but also
 reduces 
the value of $\max_{i} \frac{v_i\tau}{p_i n}$ by order of magnitude in most of the regimes. Let us take the example of
dorothea with $\tau=256$, then the overall number of passes over data is 
$O(1.52+\frac{256.91}{\lambda}\log(\frac{1}{\epsilon}))$ if $v$ is computed 
using Formula~\ref{eq:sig798sg9ss}  and $O(8715.8+\frac{16.68}{\lambda}\log(\frac{1}{\epsilon}))$ if $v$
is computed 
using Formula~\ref{eq:w_i}. Hence for small enough strong convexity parameter $\lambda$, it is worth to spend more time in computing 
a good parameter $v$ using Formula~\ref{eq:w_i}, which will then be compensated by a smaller iteration complexity.


\begin{table}[!t]
\centering
        \begin{tabular}{|c|c|c|c|c|c|c|c|}
            \hline
             \multirow{2}{*}{ Data} & \multirow{2}{*}{ $\tau$}& \multicolumn{3}{ c|}{$\frac{\mathrm{time~of~computing~} v}{\mathrm{time~of~one~pass~over~data}}$} & \multicolumn{3}{ c|}{$\max_{i}\frac{v_i\tau}{p_in}$} \\
            \cline{3-8}
         &  &  Form.~\ref{eq:si98gsssf} & Form.~\ref{eq:sig798sg9ss}&    Form.~\ref{eq:w_i}  & Form.~\ref{eq:si98gsssf}  & Form.~\ref{eq:sig798sg9ss} & Form.~\ref{eq:w_i} \\
            \hline
         &   1& 1 & 1 & 1 &1 & 1 & 1 \\
            \cline{2-8}
  w8a & 8 & 1 &  1.9 &382.5 & 8.11  & 8.11& 1.92  \\
           \cline{2-8}
  {$n=300$}  &16 &1  & 1.8& 380.3 & 17.07  & 17.10 & 3.03 \\
  \cline{2-8}
 {$m=49749$}&24 & 1  & 2.0&377.2 & 24.96  & 24.97 &3.98 \\
  \cline{2-8}
 {$\frac{\nnz}{nm}\simeq 3.8\%$} &128 & 1  & 1.8 & 331.4& 145.92 & 50.45  & 20.80 \\
  \cline{2-8}
  &256 & 1  & 1.8& 313.2 & 194.56  & 67.13& 50.54 \\
           \hline
            &   1& 1 & 1 & 1 &1 & 1 & 1 \\
            \cline{2-8}
  dorothea & 8 & 1  & 3.2& 8057.1 & 8.01 & 8.01  & 1.44 \\
           \cline{2-8}
  {$n=100000$}  &16 &1  & 1.52& 8442.4 & 16.01 & 16.01 & 1.93  \\
  \cline{2-8}
 {$m=800$}&24 & 1 & 1.52 &8546.5 & 24.01  & 24.01 &2.42 \\
  \cline{2-8}
 {$\frac{\nnz}{nm}\simeq 0.91\%$} &128 & 1  & 1.52 & 8686.6& 128.13  & 128.13& 8.79  \\
  \cline{2-8}
  &256 & 1  & 1.52 & 8715.8& 256.91 & 256.91& 16.68  \\
  \cline{2-8}
  &1024 & 1  & 1.53 & 8724.1& 1038.1& 1038.1 & 64.5  \\
           \hline
   \end{tabular}
\caption{Comparison of  Formula~\ref{eq:si98gsssf}, Formula~\ref{eq:w_i}  and Formula~\ref{eq:sig798sg9ss}. }
\label{tbl:experiments}
\end{table}

\subsection{Optimal sampling}

Proposition~\ref{prop:98ys98sh8s} should be understood in the context of complexity results for randomized coordinate descent, such as those in Table~\ref{tbl:complexity}. For instance, in view of \eqref{eq:sig798sg9ss} for an arbitrary sampling $\hat{S}$ such that $|\hat{S}|\leq \tau$ with probability 1, the accelerated coordinate descent method developed in \cite{Paper1} has complexity
\begin{equation}\label{eq:98gs98s}\sqrt{2\sum_{i=1}^n \frac{v_i (x^0_i-x^*_i)^2}{p_i^2}} \times \frac{1}{\sqrt{\epsilon}} = \sqrt{2\sum_{i=1}^n \frac{\sum_{j=1}^m \min\{|J_j|,\tau\}\bA_{ji}^2 (x^0_i-x^*_i)^2}{p_i^2}} \times \frac{1}{\sqrt{\epsilon}} .\end{equation}
Naturally, the bound improves if we use a specialized sampling, such as the $\tau$-nice sampling (since the constants $v_i$ become smaller). 

Sometimes, one can find a sampling which minimizes the complexity bound. For instance, if we restrict our attention to serial samplings only (samplings picking  a single coordinate at a time), then one can find probabilities $p_1,\dots,p_n$, which uniquely define a sampling, minimizing the complexity bound:
\begin{equation}\label{eq_optimal_sampling} p_i = \frac{\left(w_i (x_i^0-x_i^*)^2\right)^{1/3}}{\sum_{i=1}^n \left(w_i (x_i^0-x_i^*)^2\right)^{1/3}}, \qquad i\in [n],\end{equation}
where $w_i = \sum_{j} \bA_{ji}^2$. Note that if the $i$th coordinate is optimal at the starting point (i.e., if $x_i^0=x_i^*$), then the prediction is to choose $p_i=0$ (i.e., to never update coordinate $i$) -- this is what one would expect. Using the serial sampling defined by \eqref{eq_optimal_sampling}, the complexity \eqref{eq:98gs98s}  takes the form
\[C_{opt} = \sqrt{2}\left(\sum_{i=1}^n w_i^{1/3} (x_i^0 - x_i^*)^{2/3}\right)^{3/2}\times \frac{1}{\sqrt{\epsilon}} = \frac{\sqrt{2}\|d\|_{2}^3}{\sqrt{\epsilon}},\]
where $d\in \R^n$ with $d_i = w_i^{1/6}(x_i^0-x_i^*)^{1/3}$ and $\|d\|_{q} = (\sum_{i=1}^n d_i^{q})^{1/q}$. However, if the uniform serial sampling is used instead (each coordinate is chosen with probability $p_i=1/n$), then the complexity \eqref{eq:98gs98s} has the form
\[C_{unif} = \sqrt{2} n \left(\sum_{i=1}^n w_i (x_i^0 - x_i^*)^2 \right)^{1/2} \times \frac{1}{\sqrt{\epsilon}} = \frac{\sqrt{2}n \|d\|_6^3}{\sqrt{\epsilon}}.\]

While $\|d\|_6\leq \|d\|_2$ for all $d$, these quantities can be equal, in which case $C_{opt}$ is $n$ times better than $C_{unif}$.

%

\section{Conclusion}\label{sec:conclusion}

We have conducted a systematic study of ESO inequalities for a large class of functions (those satisfying Assumption~\ref{ass:f}) and {\em arbitrary} samplings. These inequalities are crucial in the design and complexity analysis of randomized coordinate descent methods. This led us to study  standard and normalized largest eigenvalue of the Hadamard product of the probability matrix associated with a sampling and a certain positive semidefinite matrix containing the data defining the function. Using our approach we have established new ESO results and also re-derived  ESO results
already established in  the literature (in the case of uniform samplings) via different techniques. Our approach can be used to derive further bounds for specific samplings and can potentially be of interest outside the domain of randomized coordinate descent.

\bibliographystyle{plain}
\bibliography{biblio}

\clearpage

\appendix

\section{Frequently used notation}

\begin{table}[!h]
\begin{center}
\begin{tabular}{|c|l|c|}
 \hline
\multicolumn{3}{|c|}{{\bf Samplings}}\\
\hline
$\Prob$ & Probability  & \\
$\Exp$ & Expectation & \\
$S, J$ & subsets of $[n]\eqdef \{1,2,\dots,n\}$ & \\
$\hat{S}$ & sampling, i.e., a random subset of $[n]$ &  Sec~\ref{sec:introESO},\ref{subsec:samplings}\\
$\hat{E}_S$ & elementary sampling associated with set $S\subseteq [n]$ & Def~\ref{df:elementary}\\
$\hat{S}_1\cap \hat{S}_2$ & intersection of samplings $\hat{S}_1$ and $\hat{S}_2$  & Def~\ref{def:intersection}\\
$J\cap \hat{S}$ & restriction of sampling $\hat{S}$ to set $J$ (=$\hat{E}_S\cap \hat{S}$) & Def~\ref{def:restriction} \\
$\bP = \bP(\hat{S})$ & $n$-by-$n$ probability  matrix:  $\bP_{ij} = \Prob(\{i,j\}\subseteq \hat{S})$ & Sec~\ref{sec:prob}  \\ 
$p_i$ & $p_i= \bP_{ii} = \Prob(i \in \hat{S}) $ & \eqref{eq:p_i}\\
$p$ & $p =  (p_1,\dots,p_n)^\top \in \R^n$ & \eqref{eq:p_i}\\
\hline
\multicolumn{3}{|c|}{{\bf Matrices and vectors}} \\
\hline
$e$ & the $n$-by-$1$ vector of all ones & \\
$e_i$ & the $i$-th unit coordinate vector in $\R^n$ & \\
$h_{[S]}$ & for $h\in \R^n$ and $S\subseteq [n]$, this is defined by $h_{[S]}=\sum_{i\in S} h_i e_i$ & \\
$\bA$ & $m$-by-$n$ data matrix defining $f$ & \eqref{eq:shs7hs8}\\
$J_j$ & the set of $i\in [n]$ for which $\bA_{ij}\neq 0$ & \eqref{a-Jj} \\
$\bI$  & $n$-by-$n$ identity matrix & \\
$\bE$ & $n$-by-$n$ matrix of all ones & \\
$\Diag$ & outputs a diagonal matrix based on its argument (matrix or vector) & \\
$ \circ $ & Hadamard (elementwise) product of two  matrices or vectors &  \\
$\bM_{[S]}$ & restriction of matrix $\bM\in \R^{n\times n}$ to rows and columns indexed by $S$  & \eqref{eq:sig98s445566} \\
$\lambda(\bM)$ & maximal eigenvalue of $n$-by-$n$ matrix $\bM$ & Sec~\ref{sec:eig}; \eqref{eq:lambdaM} \\
$\lambda'(\bM)$ & normalized maximal eigenvalue of $n$-by-$n$ matrix $\bM$ & Sec~\ref{sec:eig}; \eqref{eq:lambda'M} \\
$\lambda'(\hat{S})$ & shorthand notation for $\lambda'(\bP(\hat{S}))$ & \\
\hline
\end{tabular}
\end{center}
\caption{Notation appearing frequently in the paper.}
\label{tbl:notation}
\end{table}

\end{document}